\newcommand*{\rom}[1]{\expandafter\@slowromancap\romannumeral #1@}
\newcommand{\calS}{{\mathcal S}}
\newcommand{\mP}{{\mathbb P}}
\newcommand{\N}{\mathbb N}
\newcommand{\Gb}{\mathbb G}
\newcommand{\Hb}{\mathbb H}
\newtheorem{theorem}{Theorem}
\newtheorem{lemma}{Lemma}
\newtheorem{rem}{Remark}
\newtheorem{example}{Example}
\newtheorem{definition}{Definition}
\newtheorem{corollary}{Corollary}
\title[Ranking graphs through   hitting times]{Ranking graphs through   hitting times of Markov chains
}
\author{Emilio De Santis}
\email{desantis@mat.uniroma1.it}
\address{University of Rome La Sapienza, Department of Mathematics. Piazzale Aldo Moro, 5, 00185, Rome, Italy}
\begin{document}

\begin{abstract}
 In the present paper we show that for any  given digraph $\Gb =([n], \vec{E})$, i.e. an oriented graph  without self-loops and 2-cycles, one can construct a 1-dependent Markov chain and $n$ identically distributed hitting times $T_1, \ldots , T_n $ on this chain such that the probability of the  event
$T_i > T_j $, for any $i, j = 1, \ldots n$, is larger than $\frac{1}{2}$ if and only if $(i,j)\in \vec{E}$.
This result is related to various \emph{paradoxes} in  probability theory, concerning in particular non-transitive dice.
\medskip
\newline
\emph{Keywords:} 1-Dependent Markov Chain; Ordering; Paradoxes in Probability Theory.
\newline
\medskip
\emph{AMS MSC 2010:} 60J10, 91A10, 91B06.
\end{abstract}

\maketitle
\section{Introduction}\label{introb}


Let us consider a collection of  random variables $\mathcal{Y}_n = \{Y_1, \dots, Y_n\}$, defined on a
same probability space and satisfying the no-tie condition,  i.e. any two of them are equal with probability zero. 
Let $ [n]:=\{1, 2,\dots ,n\}$.
For $ i \neq j \in [n]$, one
says that $Y_i$ is \emph{less than} (resp. is \emph{equivalent to}) $Y_j$ in the \emph{stochastic precedence} sense if
$$
\mP (Y_i < Y_j) > \frac{1}{2}, \qquad \left ( \text{resp. } \mP (Y_i < Y_j ) = \frac{1}{2} \right ).
$$
This notion 
is natural in many
applications, see e.g. \cite{AS2000, blyth72, DSS1} and 
references therein. From a theoretical point of view, it gives rise to a series of apparent paradoxes which are 
caused by the non-transitivity of the stochastic precedence comparison, 
see  \cite{DMS19, Savage, trybula1969}.

In order to present some results it is convenient to introduce the definition of   \emph{ranking graph}  $ \Gb (   \mathcal{Y}_n    )   = ([n], \vec{E}   (   \mathcal{Y}_n    )   )$ associated with the  set of random variables $\mathcal{Y}_n = \{Y_1, \dots, Y_n\}$.  For any $i,j \in [n]$, the pair  $(i,j)$ is an arrow of 
$\vec{E}   (   \mathcal{Y}_n    )  $ if and only if $Y_j $ is less than  $Y_i $ in the stochastic precedence sense.  

Although expressed in a different form, it is known that for any digraph $\Hb =([n], \vec E)$ there exists a set of $n $ random variables $ \mathcal{Y}_n $ such that the ranking graph $\Gb (   \mathcal{Y}_n    ) $ 
coincides with $\Hb  $, i.e. any set of stochastic precedence relations can be achieved (see \cite{McGarvey1953, Saari}).
This result was initially obtained in the field of voting theory by McGarvey, see \cite{McGarvey1953}, but it  can  be rewritten by using random  variables and the stochastic precedence comparison between them  (see \cite{blyth72, DS20, Saari}). We also mention \cite{S1959},  \cite{ErdosMoser}, \cite{AlonPara}  and \cite{Dominating2006} which improve McGarvey's result by providing precise upper and lower bounds on the number of voters needed to achieve all possible ranking graphs. 


In this paper we take a different point of view and we give a positive answer to the following problem: given a digraph $ \Hb   =([n], \vec E) $, can we find a stationary Markov chain with $n$ \emph{identically distributed} hitting times $\mathcal{T}_n = \{T_1, \ldots , T_n \}$ such that $\Gb (   \mathcal{T}_n    ) =\Hb  $?
Our interest in this problem is twofold. 
\begin{itemize}
\item[1)] The problem loses its combinatorial structure and becomes entirely probabilistic. 
\item[2)] There are many applications where it is preferable to use hitting times rather than arbitrary random variables.
\end{itemize}

In various applications the type of random variables to use is constrained by the nature of the 
problem.
For instance, in many competitive games the winner is the player who achieves his goal or target before the others. In this situation 
one is forced to work with hitting times. As an example of an application in this field, in the final part of the paper 
 we define and analyze a Penney-type game in which  these random variables arise quite naturally.

From a theoretical  point of  view, we emphasize that the structure of the hitting times is very particular and many properties on their distributions are known a priori, see e.g. \cite{DSS5},
\cite{MaSc} and references therein. Moreover, the stationary Markov chains used in our construction have the particular property that the square of the transition matrix has all of its entries equal. Hence, they are \emph{1-dependent uniform}, in the sense that they are $1$-dependent \cite{1-dependent, 1-bis}, with uniform  invariant distribution.


The plan of the paper is as follows.

In the next Section \ref{uniform} we give some basic notation and definitions, and preliminary results. In
Section \ref{realization}, we present our  main result through an explicit construction of  1-dependent uniform Markov chains.   In Section \ref{comb}, we present a Penney-type games and we develop a qualitative analysis of it. In particular, we identify a threshold value beyond which it is possible to construct a Penney-type game unfavorable to the starting  player whereas this is impossible below the same threshold.

\section{1-dependent uniform Markov chains, patterns and identically distributed hitting times} \label{uniform}

First of all, recall that
for a collection of r.v. $\mathcal{Y}_n= \{Y_1, \ldots , Y_n\}$, we
 defined the \emph{ranking graph}  $ \Gb (   \mathcal{Y}_n    )   = ([n], \vec{E}   (   \mathcal{Y}_n    )   )$ in the  following way.  For any $i,j \in [n]$,
\begin{equation} \label{realizza}
(i,j ) \in \vec E     (   \mathcal{Y}_n    )   \iff  \mP (Y_j < Y_i)   > \frac{1}{2}  .
\end{equation}
 It is clear  that $  \Gb (   \mathcal{Y}_n    )    $ does not have loops or 2-cycles, thus  it is a digraph.

We also  recall the definition of 1-dependent uniform chain. 
\begin{definition}\label{1uc}
A  Markov chain  is a \emph{1-dependent uniform chain} if it is stationary and the square of its transition matrix has all equal entries.  The set of  1-dependent uniform chains is denoted by  $\mathcal{M}_1$.
%
%
\end{definition}
We notice that a 1-dependent uniform chain $\mathbf{X} =(X_m :m \in \N_0)$ has the property that, 
 for any sequence of increasing indices $m_1, m_2, \ldots $ with $m_{\ell+1} - m_{\ell } \geq 2$ (for $\ell \in \N $),   the  
random variables  
$(X_{m_\ell} : \ell \in  \N )$ are  i.i.d.  with $X_{m_\ell}$ distributed uniformly on the state space. 
 In particular, 
the initial distribution 
is invariant and it coincides with the uniform distribution. For further properties on 1-dependent chains see \cite{1-dependent}, \cite{1-bis},  where such chains are studied in detail and characterized.

\subsection{Construction of 1-dependent uniform Markov chains.}

Let $k, N \in \N $ with $N\geq 2$,  we  consider a sequence of i.i.d. random (column) vectors 
$\mathcal{U}_m =(U_{m,1}, \ldots , U_{m,k})^T$, with $U_{m, i}$ i.i.d. uniformly distributed on $[N]$, for $i \in [k]$ and  $m \in \N_0$.
%
Next define the $k \times 2 $ matrix 
\begin{equation}\label{duecolonne}
X^{(N,k)}_m = [\mathcal{U}_m, \mathcal{U}_{m+1}] , \text{ }m \in \N_0.
\end{equation}
Notice that the collection of random variables
$\mathbf{X}^{(N,k)}=   (X^{(N,k)}_m: m \in \N_0)  $ forms a $1$-dependent uniform chain, in particular it starts
with the uniform distribution on the state space  of all the matrices $k \times 2 $ with elements belonging to $[N]$
 (see Example \ref{espattern}). We notice that, for any $k\in \N$ and $N \geq 2$ the Markov chain $\mathbf{X}^{(N,k)}$ is not reversible,
instead  $\mP ( X^{(N,k)}_0 = [\mathbf{1}, \mathbf{1}] ,  X^{(N,k)}_1 = [\mathbf{1}, \mathbf{2}] ) = N^{-3k} $ but $\mP ( X^{(N,k)}_0 = [\mathbf{1}, \mathbf{2}] ,  X^{(N,k)}_1 = [\mathbf{1}, \mathbf{1}] ) =0 $, where $\mathbf{1}$ and $ \mathbf{2}$ are column vectors with all the entries equal to $1$ and $2$, respectively.

\subsection{Patterns}
For  $M , k \in  \N $,  a \emph{pattern} $Q=  (q_{i,j} \in  [M] \cup \{0 \}   : i \in [k], j \in [2])   $ is a  $k \times 2 $ matrix, with the property that 
$$
q_{i, 1} \in [M ],  \text{ for } i \in [k], \text{ and } 
\sum_{i=1}^k \mathbf{1}_{\{  q_{i,2} \neq 0     \}} =1.
$$
  For a pattern $Q$ we define the \emph{index of  hump} as 
\begin{equation} \label{indice}
h(Q) = j \text { if } q_{j, 2} >0. 
\end{equation}
The collection of all the 
 $k \times 2 $  patterns  with entries
 in $ [M] \cup \{ 0 \}$ is denoted by $\mathcal{P}_{M, k}$.
In particular,  any  pattern in   $    \mathcal{P}_{M, k} $ has a
 number of entries different from zero which is equal to  $k+1$.

\subsection{Hitting time of a pattern}

For any $h \in [k]$, we define the projection $\Pi_h$ on the set of 
$k \times 2 $ matrices in the following way:  
if $ S=(s_{i,j}    : i \in [k], j \in [2])$ , then 
\begin{equation} \label{proietti}
\Pi_h (S) =   
\begin{bmatrix}
       s_{1,1} & 0  \\
\vdots & \vdots \\
s_{h-1,1} & 0 \\
       s_{h,1} & s_{h,2} \\
s_{h+1,1} & 0 \\
       \vdots & \vdots \\
s_{k,1} & 0
     \end{bmatrix}   .
\end{equation}


Now let  $\mathbf{X}^{(N,k)}      =(X^{(N,k)} _m:m \in \N_0)  $ be 
as in \eqref{duecolonne}
and  let   $R \in \mathcal{P}_{M,k}$ be a target pattern, with $2 \leq M \leq N$. We define the hitting time of $R$ as
\begin{equation}\label{stopping}
  T_R =   \inf
   \{ m \in \N_0 :  \Pi_{h(R)} ( X^{(N,k)} _m ) =  R   \} , 
\end{equation}
In the following when  $ \Pi_{h (R)} ( X^{(N,k)} _m ) =  R$ holds we will write
$ X^{(N,k)} _m \triangleright R$. 

The hitting time  $T_R $ can be interpreted as the first time in which the pattern $R $ occurs  in the random sequence   $(X^{(N,k)} _m)_{ m\in \N_0}$. From the finiteness of state space $[N]^k$ and the irreducibility of the Markov chain $\mathbf{X}^{(N, k)}$ it follows that $T_R$ is finite almost surely. 

\subsection{Overlap}
For two different patterns
\begin{equation}\label{duepat}
R =( r_{i,j}  \in [M]    : i\in [k], j   \in[2]  )    ,  S=( s_{i,j} \in [M]    : i\in [k], j   \in[2]  )  \in \mathcal{P}_{M,k} ,
\end{equation}
 we define the \emph{overlap}
$ O(   R  , S   )\in  \{0,1\}^2 $ in the following way 
\begin{equation}\label{overlapvec1}
  O_1 (R ,S )=
\left \{
  \begin{array}{ll}
    1, & \text{ if }   r_{h  , 1} =
  s_{h, 1 } , \text{ }h \in [k] \text{ and }  \delta_{h(R) , h(S)} (r_{h(R), 2}- s_{h(S), 2})=0
 ; \\
    0, & \text{ otherwise. } \\
  \end{array}
\right .
\end{equation}
\begin{equation}\label{overlapvec2}
  O_2 (R ,S )=
\left \{
  \begin{array}{ll}
    1, & \text{ if }r_{h(R), 2} = s_{h(R), 1}  
 ; \\
    0, & \text{ otherwise. } \\
  \end{array}
\right .
\end{equation}
An analogous definition for strings has given in \cite{GO1981}.
We  notice that 
$T_R $ and $ T_S $ are different w.p. 1 if and only if 
$O_1(R ,S )=0 $. The meaning of
$O_2(R ,S ) $ will be clarified by Lemmas \ref{richiesto}-\ref{lemmastima} and Theorem \ref{thgrfo}  in which the relevance of $O_2(R ,S ) $ for the computation of $\mP(T_R < T_S)$ becomes evident.

 The overlap $O(R, S)$ is in general different from $ O(S, R)   $. Moreover it makes sense to consider the overlap of a pattern with itself, 
  in which  case the first component is always
  equal to  $1$.   
 For  patterns $ R_1, \ldots , R_n \in \mathcal{P}_{M,k}$, one has
  \begin{equation}\label{sipuo}
 \text{no-tie property of }  T_{R_1}, \ldots , T_{R_n}
 \iff   \text{for  distinct } i,j \in [n] , \text{ } O_1(R_i, R_j) =0 .
  \end{equation}
  In this case, for sake of simplicity,  we say that the collection of patterns $R_1, \ldots , R_n $ is \emph{no-tie}.


To familiarize with the notions and definitions we present the following example

\begin{example}  \label{espattern}
Let
$$
R=
     \begin{bmatrix}
       1 & 0  \\
       2 & 1 \\
       2 & 0
     \end{bmatrix}
   , \,\,\,
 S=
     \begin{bmatrix}
       1 & 0  \\
       1 & 0 \\
       2 & 1
     \end{bmatrix} ,
$$
be two patterns in $\mathcal{P}_{2, 3}$, with index of hump $i (R) =2 $ and $i (S)= 3$.
The overlap $O(R, S) =(0,1 )$ and $O( S, R) =(0, 0)$. Thus the collection of patterns $\{ R,  S \}$ is no-tie.

\noindent
Suppose that
$$\mathbf{X}^{(2,3)}=
     \begin{array}{ccccccc}
       1 & 1 & 1 & 1  &  1 & 2 &  \cdots  \\
       2 & 2 & 1 & 1  & 2 & 1 & \cdots \\
       2 & 1 &  2 & 1 & 2 & 1 & \cdots  \\
     \end{array}
$$
Then  $T_S =2 $ being
$$
S \neq \Pi_3 \begin{bmatrix}
         1 & 1   \\
        2 & 2   \\
        2 & 1
     \end{bmatrix} =
\begin{bmatrix}
         1 & 0   \\
        2 & 0   \\
        2 & 1
     \end{bmatrix}
 , \quad
S \neq \Pi_3 \begin{bmatrix}
         1 & 1   \\
        2 & 1   \\
        1 & 2
     \end{bmatrix} =
 \begin{bmatrix}
         1 & 0   \\
        2 & 0   \\
        1 & 2
     \end{bmatrix} 
, \quad
S = \Pi_3 \begin{bmatrix}
         1 & 1   \\
        1 & 1   \\
        2 & 1
     \end{bmatrix}  
= \begin{bmatrix}
         1 & 0   \\
        1 & 0   \\
        2 & 1
     \end{bmatrix}
  .
$$
Similarly,
$T_R = 4$.
\end{example}

\section{Ranking graphs through identically distributed hitting times } \label{realization}

Let  $R$ be a pattern with $O(R, R) =(1,0)$.
Following the proof of Theorem 2.1 in  \cite{BT82} we  recursively compute the discrete distribution of  $T_{R} $.
\begin{lemma} \label{tt11nuovo}
Let $k,M,N$ be integers such that $N \geq M \geq 2$ and $k\geq 1$. Let $R \in \mathcal{P}_{M,k}$ with $O(R, R) =(1,0)$ and  consider the 1-dependent uniform chain $\mathbf{X}^{(N , k)} $. Define
$w  (t):= \mathbb{P} ( T_{R} =t)$, then 
the probabilities $(w(t) : t \in \N_0 )$ are recursively determined
 from
\begin{equation} \label{iterate}
w(t) =  N^{-k-1} - N^{-k-1}
 \sum_{s = 0}^{t-2} w (s) .
\end{equation}
\end{lemma}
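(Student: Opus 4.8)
The plan is to recast $T_R$ as the first success in a sequence of pattern-match events and to exploit both the $1$-dependence of $\mathbf{X}^{(N,k)}$ and the hypothesis $O(R,R)=(1,0)$. For each $m \in \N_0$ set $A_m := \{ X^{(N,k)}_m \triangleright R \}$, so that $T_R = \inf\{m : A_m\}$ and $w(t) = \mP(A_0^c, \ldots, A_{t-1}^c, A_t)$. By \eqref{proietti} and \eqref{stopping}, the event $A_m$ asserts that $\mathcal{U}_m$ equals the first column of $R$ and that $U_{m+1, h(R)} = r_{h(R),2}$; these are $k+1$ independent uniform constraints on $[N]$, whence $\mP(A_m) = N^{-k-1} =: p$ for every $m$. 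The crucial bookkeeping point I record at once is that $A_m$ is measurable with respect to $\mathcal{U}_m$ together with the single coordinate $U_{m+1,h(R)}$.

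Two structural facts then drive the recursion. First, $O(R,R)=(1,0)$ means $O_2(R,R)=0$, i.e.\ $r_{h(R),2} \neq r_{h(R),1}$ by \eqref{overlapvec2}; but $A_m$ forces $U_{m+1,h(R)} = r_{h(R),2}$ while $A_{m+1}$ forces $U_{m+1,h(R)} = r_{h(R),1}$, so the consecutive events are disjoint, $A_m \cap A_{m+1} = \vuoto$. Second, the event $\{T_R = s\}$ is measurable with respect to $\mathcal{U}_0, \ldots, \mathcal{U}_s$ and the coordinate $U_{s+1,h(R)}$, so whenever $t \geq s+2$ the pair $(\mathcal{U}_t, U_{t+1,h(R)})$ is independent of it; hence $A_t$ is independent of $\{T_R = s\}$ and $\mP(T_R = s,\, A_t) = w(s)\, p$.

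With these in hand I decompose. Since $A_t \subseteq \{T_R \le t\}$ and the events $\{T_R = s\}$, $s=0,\dots,t$, partition this event,
\[
p = \mP(A_t) = \sum_{s=0}^{t} \mP(T_R = s,\, A_t).
\]
The term $s=t$ equals $w(t)$ because $\{T_R=t\}\subseteq A_t$; the term $s = t-1$ vanishes because $\{T_R = t-1\} \subseteq A_{t-1}$ and $A_{t-1}\cap A_t = \vuoto$; and each term with $s \leq t-2$ equals $w(s)\,p$ by the independence established above. Solving for $w(t)$ gives $w(t) = p - p\sum_{s=0}^{t-2} w(s)$, which with $p=N^{-k-1}$ is precisely \eqref{iterate}. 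I expect the only delicate point to be this coordinate bookkeeping: the vanishing of the $s=t-1$ term is exactly where the hypothesis $O(R,R)=(1,0)$ enters, while identifying the independent range as $s\le t-2$ (and not $s\le t-1$) rests on the fact that $A_m$ reads only a single coordinate of $\mathcal{U}_{m+1}$. The boundary cases $t=0,1$, where the sum is empty and $w(t)=p$, serve as a consistency check.
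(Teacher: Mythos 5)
Your proof is correct and follows essentially the same route as the paper: both decompose the probability-$N^{-k-1}$ event $\{X^{(N,k)}_t \triangleright R\}$ over the values $T_R = s$, $s \le t$, kill the $s = t-1$ term via $O_2(R,R)=0$, and factor the $s \le t-2$ terms via 1-dependence. The only difference is that you make explicit, at the level of the i.i.d.\ vectors $\mathcal{U}_m$, the disjointness $A_{t-1}\cap A_t = \vuoto$ and the independence claim that the paper simply attributes to 1-dependence.
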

\begin{proof}
Consider the event $\{  X^{(N , k)}_t    \triangleright R   \}$,  it holds true with
 probability $N^{-k-1}$. Moreover, $\{  X^{(N , k)}_t    \triangleright R   \}$ can be written as the union of the following three disjoint events: 
\begin{itemize}
\item[(i)]  $  \{ T_{R} = t   \}$;
\item[(ii)]  $ \{  X^{(N , k)}_t   \triangleright R   \} \cap  \{ T_{R } = s  \}$, for $s < t-1 $;
\item[(iii)] $ \{  X^{(N , k)}_t   \triangleright R    \} \cap  \{ T_{R } = t-1  \}$.
\end{itemize}
The probability of $  \{ T_{R } = t   \}$ is by definition $w (t)$.

For $s < t-1 $, as a consequence of 1-dependence, one has
$$
\mathbb{P}   (    \{  X^{(N , k)}_t   \triangleright R    \} \cap  \{ T_{R } = s  \}  ) =
w(s )\mathbb{P}   (    X^{(N , k)}_t   \triangleright R     )  = N^{-k-1}   w (s ) .
$$
 The event in (iii) has probability zero because,
by hypothesis  $O_2(R, R) =0$.
Therefore
$$
N^{-k - 1} =   w (t ) +  N^{-k-1}
 \sum_{s = 0}^{t-2} w (s) ,
$$
which corresponds to \eqref{iterate}.
\end{proof}
\begin{rem}\label{remare}
Let  $ R_1, \ldots , R_n \in  \mathcal{P}_{M,k}$ such that $O(R_i,R_i) =(1,0)$, for each $i \in [n]$. Let $N \geq M$ and consider the 1-dependent uniform chain
$\mathbf{X}^{(N , k)} $. Then, by Lemma \ref{tt11nuovo}, the hitting times $T_{R_1}, \ldots ,   T_{R_n}  $ are identically distributed because the distribution of any $T_{R_i}$
is given by \eqref{iterate}.
\end{rem}

Given a digraph $ \bar \Gb=([n ],  \vec{E} )$, we now construct a collection of associated patterns $\{ R_u \in \mathcal{P}_{n+1,n+1}:u \in [n]\}$.
For $\ell \in [n]$,  pattern $R_\ell =(r^{(\ell)}_{i,j}  : i \in [n+1] , j\in [2]         ) $ is constructed in the following way:
\begin{itemize}
\item[\emph{1}.] $r^{(\ell)}_{1,1} =r^{(\ell)}_{\ell +1 ,2 } =\ell  $;
  \item[\emph{2}.] for any $j \in [n +1] \setminus \{ \ell+1 \}$,  $ r^{(\ell)}_{j, 2} =0$;
  \item[\emph{3}.] for any $j \in [n ]     $,
  \begin{equation} \label{over}
   r^{(\ell )}_{j+1,1} = \left \{
                            \begin{array}{ll}
                              j, & \text{ if  $(\ell,j   )\in \vec E $;} \\
                             n+1, & \text{ otherwise. } \\
                            \end{array}
                          \right .
   \end{equation}
\end{itemize}
We will say that the patterns $R_1, \dots , R_n $ are \emph{generated} by
the graph $\bar \Gb =([n], \vec E)$.

First notice that $h (R_\ell ) =  \ell +1 $, for $\ell \in [n]$. 
In order to explain our construction of the patterns we observe that when $(i, j) \in \vec{E}$ then
$O_2(R_i, R_j) = 0$ and $O_2(R_j, R_i)= 1$ (see Lemma \ref{tt11} below), and this will cause $  \mP (T_{R_j}    < T_{R_i}  ) > \frac{1}{2}   $ through the phenomenon of clustering (see Lemma \ref{richiesto}-\ref{lemmastima} and Theorem \ref{thgrfo}). This phenomenon is analogous to what happens to the appearance of strings in a sequence of letters randomly drawn (see e.g. \cite{chen1979, GO1981}). To illustrate the notation, we present the following example.

\begin{example}  \label{escostru}
Let us consider the graph $ \bar \Gb=([3],  \vec{E} )$ with $ \vec{E} = \{ (1,3), (3,2), (2,1) \}$. Then the 
patterns $R_1, R_2 , R_3$ generated by $ \bar \Gb   $ are 
$$
R_1=
     \begin{bmatrix}
       1 & 0  \\
       4 & 1 \\
       4 & 0  \\
       3 & 0
     \end{bmatrix}
   , \qquad
 R_2=
     \begin{bmatrix}
       2 & 0  \\
       1 & 0 \\
       4 & 2  \\
       4 & 0
     \end{bmatrix} ,
\qquad
 R_3=
     \begin{bmatrix}
       3 & 0  \\
       4 & 0 \\
       2 & 0  \\
       4 & 3
     \end{bmatrix}.
$$
The present example will be continued at the end of the section where the patterns will be employed in the definition of three hitting times that show non-transitivity for the stochastic precedence.
\end{example}

We now consider the properties of the overlap for patterns $R_1, \dots , R_n $  generated by a digraph $\bar \Gb =([n], \vec E)$.

\begin{lemma} \label{tt11}
Let $n \geq 2$ and  $\bar \Gb =([n], \vec E)$ be a digraph. The overlaps of the  patterns  $R _1 \ldots , R_n $  generated by  $\bar \Gb =([n], \vec E)$
are
$$
O(R_i, R_j) = \delta_{i,j} \cdot (1,0) + ( 1-\delta_{i,j}) \cdot \left [ 
\mathbf{1}_{\{ (i,j) \in\vec E\}}    \cdot        (0,1)  
+      \mathbf{1}_{\{ (i,j) \not \in\vec E\}}    \cdot   (0,0)  \right  ]   ,
$$
for any $i,j \in [n]$.
\end{lemma}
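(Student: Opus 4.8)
The plan is to read off both coordinates of every overlap directly from the explicit construction, splitting into the three cases $i=j$, and $i\neq j$ with or without the relevant arrow. The computation is short because each $R_\ell$ has hump index $h(R_\ell)=\ell+1$ and hump value $r^{(\ell)}_{\ell+1,2}=\ell$, so by \eqref{overlapvec1}--\eqref{overlapvec2} each overlap coordinate is decided by comparing a single designated entry of one pattern against a single entry of the other.

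First I would treat the first coordinate $O_1$. For $i=j$ it equals $1$ by the observation, recorded just before the lemma, that a pattern always overlaps itself in its first component. For $i\neq j$ I would use the top row: since $r^{(i)}_{1,1}=i\neq j=r^{(j)}_{1,1}$, the two first columns already disagree at row $1$, so the defining condition in \eqref{overlapvec1} fails and $O_1(R_i,R_j)=0$. In particular this simultaneously establishes that the family $R_1,\dots,R_n$ is no-tie in the sense of \eqref{sipuo}.

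Next I would compute the second coordinate. By \eqref{overlapvec2}, $O_2(R_i,R_j)=1$ precisely when the hump value of $R_i$, equal to $i$ and sitting in row $h(R_i)=i+1$, coincides with the first-column entry $r^{(j)}_{i+1,1}$ of $R_j$ in that same row. By the construction rule \eqref{over} applied to $R_j$ (with running index $i$), this entry is $i$ exactly when the arrow that \eqref{over} records in row $i+1$ of $R_j$ is present, namely $(j,i)\in\vec E$, and equals the dummy symbol $n+1$ otherwise; since $i\le n<n+1$ we obtain $O_2(R_i,R_j)=1$ iff that arrow lies in $\vec E$ and $0$ otherwise. Specializing to $i=j$ gives the self-overlap: the entry $r^{(i)}_{i+1,1}$ equals $n+1$ because $\bar\Gb$ has no self-loops, and $i\neq n+1$ forces $O_2(R_i,R_i)=0$; together with $O_1(R_i,R_i)=1$ this yields $O(R_i,R_i)=(1,0)$. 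Collecting the three cases produces the stated formula.

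I do not anticipate a genuine obstacle: the patterns were engineered precisely so that each overlap coordinate is governed by a single matrix entry, and nothing deeper than direct inspection is needed. The only point demanding care is the index bookkeeping — the off-by-one shift between the vertex labels in $[n]$ and the matrix rows in $[n+1]$, and the orientation of the arrow that \eqref{over} ties to row $i+1$ of $R_j$ — together with the role of the reserved symbol $n+1$, which never coincides with any hump value and is exactly what converts the absence of an arrow into the absence of an overlap.
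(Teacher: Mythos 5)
Your computation is correct and, in substance, it is the same argument as the paper's own proof: each overlap coordinate is read off a single matrix entry, using $r^{(i)}_{1,1}=i$ for the first coordinate, rule \eqref{over} for the second, and the absence of self-loops for the self-overlap $O(R_i,R_i)=(1,0)$. The paper merely writes the two-pattern case as $O(R_j,R_i)$ with an arrow $(i,j)\in\vec E$ fixed, while you compute $O(R_i,R_j)$ and identify which arrow governs it.

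However, your closing claim that collecting the three cases ``produces the stated formula'' is false, and the discrepancy is not a harmless relabelling. What you correctly derived is
\begin{equation*}
O_2(R_i,R_j)=1 \iff (j,i)\in\vec E ,
\end{equation*}
whereas the displayed formula of the lemma asserts $O_2(R_i,R_j)=1\iff(i,j)\in\vec E$; for a digraph without $2$-cycles these two are incompatible as soon as $\vec E\neq\varnothing$, and universal quantification over $i,j$ does not reconcile them, because the formula ties the first argument of $O$ to the first coordinate of the arrow. So your derivation establishes the \emph{transpose} of the statement as printed. As it happens, the printed statement is the faulty one: it contradicts the paper's own proof (which shows $(i,j)\in\vec E\Rightarrow O_2(R_j,R_i)=1$ and $O_2(R_i,R_j)=0$), the remark preceding the lemma, Example \ref{escostru} (there $(1,3)\in\vec E$, yet $O(R_1,R_3)=(0,0)$ and $O(R_3,R_1)=(0,1)$), and the way Lemma \ref{tt11} is invoked in the proof of Theorem \ref{thgrfo}. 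A careful write-up must either prove the corrected identity (replace $\mathbf{1}_{\{(i,j)\in\vec E\}}$ by $\mathbf{1}_{\{(j,i)\in\vec E\}}$, or equivalently put $O(R_j,R_i)$ on the left-hand side) or explicitly flag the transposition; silently identifying $(j,i)$ with $(i,j)$, as your last sentence does, is precisely the kind of orientation error that, propagated into Theorem \ref{thgrfo}, would reverse every arrow of the ranking graph.
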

\begin{proof}
Case $i=j$, then $O_1(R_i, R_i) =1$. The second component $O_2(R_i, R_i) =0$ since $r^{(i)}_{ i+1, 1} =n+1 \neq i = r^{(i)}_{ i+1, 2}$ (see \eqref{over}).

Case $i \neq j $. $O_1(R_j, R_i) =0$ since $r^{(i)}_{1, 1} =i \neq j = r^{(j)}_{1, 1}$.

If  $(i,j)  \in \vec E$ then $O_2(R_j, R_i) =1$. Indeed, the condition in \eqref{overlapvec2}  
$
r^{(j)}_{h (R^{(j)})  , 2 } = r^{(i)}_{h (R^{(j)})  , 1 }
$ 
 holds true 
since $   r^{(j)}_{j+1, 2}   =    r^{(i)}_{j+1, 1}  = j$.

If  $(i,j)  \not \in \vec E$ then $O_2(R_j, R_i) =0$. Indeed, the condition   
$
r^{(j)}_{h (R^{(j)})  , 2 } = r^{(i)}_{h (R^{(j)})  , 1 }
$ 
 is not true 
since $   r^{(j)}_{j+1, 2}   =   j $ and $ r^{(i)}_{j+1, 1}  = n+1$.

\end{proof}

 Let us consider no-tie collection of patterns $\{ R_1, \ldots , R_\ell\}  $ belonging to $  \mathcal{P}_{M, k}$  and the corresponding hitting times $ \mathcal{T}_\ell =\{ T_{R_1}  , \ldots , T_{R_\ell}   \}$
of  $\mathbf{X}^{(N,k)}$, with $N \geq M$. We are interested to upper and lower bound
\begin{equation} \label{prob}
   p_i   (  \mathcal{T}_\ell )  : =\mP \left  (  \bigcap_{j \in [\ell ] } \{  T_{R_i}  \leq   T_{R_j}  \}         \right )   \text{ for } i \in [\ell ].
\end{equation}
First notice that  by the no-tie property one has
$\sum_{i \in [\ell ] } p_i   (  \mathcal{T}_\ell  )   =1$.

We also define the  sequence of stopping times $(Z_h: h \in \N_0 )$, as
\begin{equation}\label{Z1}
  Z_0= \inf \{ m \geq 0 :  X^{(N,k)}_m\triangleright R_i \text{ for some } i \in [\ell] \}
\end{equation}
and recursively, let
\begin{equation}\label{Zh}
  Z_{h+1}= \inf \{ m\geq   Z_{h} + 2 :  X^{(N,k)}_m\triangleright R_i \text{ for some } i \in [\ell]  \}.
\end{equation}
It is immediate to notice that every hitting time $ Z_h$ is finite almost surely. We present this simple lemma without a proof.

\begin{lemma}\label{richiesto}
For any $s\in \N$, one has
\begin{equation} \label{nonno}
\mathbb{P} (  X^{(N,k)}_{Z_s  +1   }  \triangleright R_i  |      X^{(N,k)}_{Z_s     }  \triangleright R_j    ) = \frac{O_2(R_j , R_i)}{N^k} .
\end{equation}
\end{lemma}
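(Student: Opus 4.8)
The plan is to condition on the precise location of $Z_s$ and to reduce everything to the independent uniform coordinates $(U_{m,a} : m \in \N_0,\, a \in [k])$ that generate $\mathbf{X}^{(N,k)}$. Writing $h_j := h(R_j)$ and $h_i := h(R_i)$, I would first rewrite the two relevant events coordinatewise. Since $X^{(N,k)}_m = [\mathcal{U}_m, \mathcal{U}_{m+1}]$, the relation $X^{(N,k)}_m \triangleright R_j$ says exactly that $U_{m,a} = r^{(j)}_{a,1}$ for every $a \in [k]$ and that $U_{m+1,h_j} = r^{(j)}_{h_j,2}$; likewise $X^{(N,k)}_{m+1} \triangleright R_i$ says that $U_{m+1,a} = r^{(i)}_{a,1}$ for every $a \in [k]$ and that $U_{m+2,h_i} = r^{(i)}_{h_i,2}$.

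Next, because $Z_s$ is a stopping time, I would decompose the conditioning event as the disjoint union over $m$ of $\{Z_s = m\} \cap \{X^{(N,k)}_m \triangleright R_j\}$ and argue that, for each fixed $m$, this event is measurable with respect to the $\sigma$-algebra $\mathcal{G}_m$ generated by $(U_{t,a} : t \leq m,\, a \in [k])$ together with the single coordinate $U_{m+1,h_j}$. The point to check here — and the main obstacle — is that the stopping rule never reaches past these coordinates: detecting whether a pattern is hit at a time $t$ involves only $\mathcal{U}_t$ and the hump coordinates of $\mathcal{U}_{t+1}$, so ruling out hits at all times $t \leq m-1$ uses nothing beyond $\sigma(U_{t,a}: t \leq m)$, while registering $X^{(N,k)}_m \triangleright R_j$ uses $\mathcal{U}_m$ and the one coordinate $U_{m+1,h_j}$. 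Consequently the coordinates $(U_{m+1,a} : a \neq h_j)$ and $U_{m+2,h_i}$ remain independent of $\mathcal{G}_m$ (and of one another), being distinct, unconstrained uniform variables.

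It then remains to compute the conditional probability on $\{Z_s = m\} \cap \{X^{(N,k)}_m \triangleright R_j\}$, splitting according to $O_2(R_j,R_i)$. On this event $U_{m+1,h_j}$ is pinned to $r^{(j)}_{h_j,2}$, whereas $X^{(N,k)}_{m+1} \triangleright R_i$ forces the same coordinate to equal $r^{(i)}_{h_j,1}$. If $O_2(R_j,R_i) = 0$ these prescribed values disagree, so the conditional probability is $0$. If $O_2(R_j,R_i) = 1$ they agree, and the residual requirements for $X^{(N,k)}_{m+1} \triangleright R_i$ are that the $k-1$ free coordinates $(U_{m+1,a} : a \neq h_j)$ match the first column of $R_i$ and that $U_{m+2,h_i} = r^{(i)}_{h_i,2}$; by the independence established above this has probability $N^{-(k-1)} \cdot N^{-1} = N^{-k}$. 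In either case the conditional probability equals $O_2(R_j,R_i)/N^k$ independently of $m$, so it is unchanged after averaging over $m$, which yields \eqref{nonno}.
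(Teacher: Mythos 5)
Your proof is correct, but note that the paper itself offers no argument at all for Lemma~\ref{richiesto}: it is explicitly stated ``without a proof,'' the author evidently regarding it as immediate from the $1$-dependent/renewal structure of $\mathbf{X}^{(N,k)}$. Your write-up supplies exactly the missing justification, and it identifies the one point where care is genuinely needed: the event $\{Z_s=m\}\cap\{X^{(N,k)}_m\triangleright R_j\}$ must be measurable with respect to $\mathcal{G}_m=\sigma\bigl(U_{t,a}:t\le m,\,a\in[k]\bigr)\vee\sigma\bigl(U_{m+1,h_j}\bigr)$, so that the remaining coordinates $(U_{m+1,a}:a\neq h_j)$ and $U_{m+2,h_i}$ stay fresh. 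Your justification of this is sound: ruling out or registering detections at times $t\le m-1$ only involves $\mathcal{U}_t$ and the hump coordinates of $\mathcal{U}_{t+1}$, hence only vectors of index at most $m$, while the detection at time $m$ intersected with $\{X^{(N,k)}_m\triangleright R_j\}$ pins exactly the single extra coordinate $U_{m+1,h_j}$. (Formally, $\{Z_s=m\}\cap\{X^{(N,k)}_m\triangleright R_j\}=B_{s,m}\cap\{X^{(N,k)}_m\triangleright R_j\}$ with $B_{s,m}\in\sigma(U_{t,a}:t\le m)$ describing the renewal history, since $\{X^{(N,k)}_m\triangleright R_j\}$ is contained in the detection event at time $m$.) From there the dichotomy on $O_2(R_j,R_i)$ via \eqref{overlapvec2} and the count $N^{-(k-1)}\cdot N^{-1}=N^{-k}$ give \eqref{nonno}, uniformly in $m$ and hence after averaging over the partition. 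This coordinatewise argument is also consistent with the paper's surrounding computations (e.g.\ the unconditional probability $N^{-k-1}$ in Lemma~\ref{tt11nuovo} versus the conditional $N^{-k}$ here, the discrepancy being precisely the pinned hump coordinate), so it can stand as the proof the paper omits.
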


The probabilities $( p_i   (  \mathcal{T}_\ell  )  : i \in [\ell ])$  could be explicitly calculated through a linear system
but for our purposes it will be more useful to have good upper and lower bounds.

\begin{lemma}  \label{lemmastima}
Let $N \geq M \geq 2 $ and $k , \ell \geq 2 $. 
Let $R_1, \ldots , R_\ell \in \mathcal{P}_{M, k}$ be a collection of no-tie patterns
 and let $O_2 (R_i, R_i) =0$, for $i \in [\ell ]$.
Let us take the uniform Markov chain $\mathbf{X}^{(N,k)}$ and the hitting times $    \mathcal{T}_\ell =\{ T_{R_i} : i \in [\ell ]     \} $.
Then $    \mathcal{T}_\ell  $  are identically distributed.
Moreover, for $i \in [\ell ]$,
\begin{equation} \label{probfine}
   p_i   (  \mathcal{T}_\ell )   =\frac{v_i}{\sum_{j \in [\ell]}  v_j   } ,
\end{equation}
where
\begin{equation} \label{vistima}
1- \frac{1}{N^k}  \sum_{j \in [\ell] }    O_2(R_j, R_i)  \leq v_i \leq 1 - \frac{1}{N^k}  \left (1- \frac{\ell -1 }{N^k} \right )
 \sum_{j \in [\ell] }  O_2(R_j, R_i )   . 
\end{equation}
\end{lemma}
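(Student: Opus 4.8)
The claim that $\mathcal{T}_\ell$ are identically distributed is immediate: each $R_i$ satisfies $O_1(R_i,R_i)=1$ automatically, while $O_2(R_i,R_i)=0$ by hypothesis, so $O(R_i,R_i)=(1,0)$ and Remark~\ref{remare} applies verbatim. For \eqref{probfine}--\eqref{vistima} the plan is to reduce everything to the event that $R_i$ is the \emph{first} pattern to occur. Since the collection is no-tie, by \eqref{sipuo} the hitting times are a.s.\ pairwise distinct, so writing $Z_0=\min_{j\in[\ell]}T_{R_j}$ one has $p_i(\mathcal{T}_\ell)=\mathbb{P}(X^{(N,k)}_{Z_0}\triangleright R_i)$. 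I would introduce $f_i(m):=\mathbb{P}(Z_0=m,\ X^{(N,k)}_m\triangleright R_i)$, so that $p_i=\sum_{m\in\N_0}f_i(m)$, together with the c.d.f.\ $F(m):=\mathbb{P}(Z_0\le m)$ (with $F(m)=0$ for $m<0$).

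The key step is a renewal identity extracted from stationarity. Since $\mathbb{P}(X^{(N,k)}_m\triangleright R_i)=N^{-k-1}$ for every $m$, I would partition the event $\{X^{(N,k)}_m\triangleright R_i\}$ according to the first occurrence time $s\le m$ and the pattern realized there. For $s\le m-2$ the first-occurrence event depends only on $\mathcal{U}_0,\dots,\mathcal{U}_{s+1}$ while $\{X^{(N,k)}_m\triangleright R_i\}$ depends on $\mathcal{U}_m,\mathcal{U}_{m+1}$; these index blocks are disjoint, so 1-dependence contributes $F(m-2)\,N^{-k-1}$. For $s=m-1$ the two events share the vector $\mathcal{U}_m$, and the overlap computation underlying Lemma~\ref{richiesto} turns this term into $N^{-k}\sum_{j}O_2(R_j,R_i)\,f_j(m-1)$. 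For $s=m$ the no-tie property leaves only $f_i(m)$. Collecting the three contributions yields
\[
N^{-k-1}=F(m-2)\,N^{-k-1}+N^{-k}\sum_{j\in[\ell]}O_2(R_j,R_i)\,f_j(m-1)+f_i(m).
\]
Summing this over $m\in\N_0$ is the next step. Using $\sum_m f_j(m-1)=p_j$ and the elementary identity $\sum_{m\ge0}\bigl(1-F(m-2)\bigr)=2+\mathbb{E}[Z_0]$ --- finite because $Z_0\le T_{R_1}$ and the law of $T_{R_1}$ from \eqref{iterate} has geometric-type tails --- I obtain $p_i=C-N^{-k}\sum_j O_2(R_j,R_i)p_j$, where $C:=N^{-k-1}(2+\mathbb{E}[Z_0])>0$ does \emph{not} depend on $i$. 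This $i$-independence is the crucial structural fact. Setting $v_i:=p_i/C$ turns the relation into the fixed point $v_i=1-N^{-k}\sum_j O_2(R_j,R_i)v_j$, and since $\sum_i p_i=1$ gives $\sum_i v_i=1/C$, one recovers \eqref{probfine}, i.e.\ $p_i=v_i/\sum_j v_j$.

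Finally, the bounds \eqref{vistima} come out of the fixed-point relation in two passes. Because $p_j\ge 0$ and $C>0$ one has $v_j\ge0$, and then the relation forces $v_j\le1$; substituting $v_j\le1$ gives at once the lower bound $v_i\ge 1-N^{-k}\sum_j O_2(R_j,R_i)$. Feeding this lower bound back in, and using $O_2(R_j,R_j)=0$ so that $\sum_{j'}O_2(R_{j'},R_j)\le \ell-1$ and hence $v_j\ge 1-(\ell-1)N^{-k}$, yields the upper bound $v_i\le 1-N^{-k}\bigl(1-\tfrac{\ell-1}{N^k}\bigr)\sum_j O_2(R_j,R_i)$. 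The \textbf{main obstacle} I anticipate is pinning down the boundary terms of the renewal identity: the $s=m-1$ overlap term (where \eqref{overlapvec2} enters) and the additive constant $2+\mathbb{E}[Z_0]$ are where off-by-one or sign errors are easy, and one must confirm $\mathbb{E}[Z_0]<\infty$ before interchanging sum and expectation. Once the identity is established, the passage to the fixed-point relation and the two-pass estimate are routine.
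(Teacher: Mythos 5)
Your proposal is correct --- I checked the delicate points you flagged and they hold --- but it reaches the key linear system by a genuinely different route than the paper. The paper works pathwise and asymptotically: it introduces the renewal epochs $Z_h$ (all occurrence times, separated by at least $2$), the counts $V_{i,t}$ (occurrences of $R_i$ at renewal epochs before $t$) and $N_{i,t}$ (all occurrences of $R_i$ before $t$), defines $v_i:=\lim_t V_{i,t}N^{k+1}/t$ via the ergodic theorem for renewal processes, and extracts the relation $1=v_i+N^{-k}\sum_{j\neq i}O_2(R_j,R_i)\,v_j$ from the pathwise counting identity \eqref{spiegare} together with Lemma \ref{richiesto}. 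You instead extend the Blom--Thorburn recursion of Lemma \ref{tt11nuovo} to the whole collection: you decompose the event $\{X^{(N,k)}_m\triangleright R_i\}$ according to the first epoch $Z_0=\min_j T_{R_j}$ and the pattern realized there, obtaining an exact identity for every $m$, and then sum over $m$, which yields the same linear system with the explicit normalization $v_i=p_i\,N^{k+1}/(2+\mathbb{E}[Z_0])$. The potential trouble spots all check out: the $s=m-1$ term is indeed $N^{-k}\sum_j O_2(R_j,R_i)f_j(m-1)$, because the conditioning event $\{Z_0=m-1,\,X^{(N,k)}_{m-1}\triangleright R_j\}$ is measurable with respect to $\sigma(\mathcal{U}_0,\dots,\mathcal{U}_{m-1},U_{m,h(R_j)})$ while the remaining entries of $\mathcal{U}_m$ and all of $\mathcal{U}_{m+1}$ are independent of it (the same computation that proves Lemma \ref{richiesto}); the conventions $F(m)=f_j(m)=0$ for $m<0$ make the identity valid at $m=0,1$; and $\mathbb{E}[Z_0]<\infty$ is legitimate (beyond your tail remark, it follows at once since the events $\{X^{(N,k)}_{2r}\triangleright R_1\}$, $r\in\N_0$, are independent with probability $N^{-k-1}$ each). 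The two-pass bootstrap for \eqref{vistima} is identical in both arguments, and your $v_i$ coincide with the paper's since both are proportional to $p_i$ and satisfy the same relation. What your version buys is elementarity and self-containedness: it avoids the a.s.\ limit theorems for renewal processes that the paper invokes somewhat informally in \eqref{fra1bb}--\eqref{leobb}, replacing them by a summed exact recursion, and it identifies the normalizing constant explicitly in terms of $\mathbb{E}[Z_0]$. What the paper's version buys is the frequency interpretation of $v_i$, which is reused verbatim in the proof of Theorem \ref{thgrfo}, where $N_{j,t}=V_{j,t}$ immediately gives $v_j=1$.
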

\begin{proof} 
 In the proof, we will write $X_m $ for $X_m^{(N,k)}$. 
 Lemma \ref{tt11nuovo}, Remark \ref{remare} and
 $O_2 (R_i, R_i)=0   $, for each $i$,  imply  that $    \mathcal{T}_\ell$  are identically distributed.


For any $i \in [\ell ]$, by definition
$$
p_i( \mathcal{T}_\ell) = \mP (T_{R_i}    = Z_0  ) .
$$
From the fact that the sequence of random variables $(X_m) _{m \in \N_0 }$ is 1-dependent also
\begin{equation} \label{MP}
p_i( \mathcal{T}_\ell) = \mP (X_{Z_h } \triangleright R_i  ) ,
\end{equation}
for any $h \in \N$. Moreover the times $(Z_s : s \in \N_0)$ are renewal times i.e.
$$
\mP (X_{Z_h } \triangleright R_i , Z_h -Z_{h-1} = s|   X_{Z_{h -1}} =j   , Z_{h -1}=t        ) , \text{ for } h, s \in \N , \text{ and } i \in [\ell],
$$
does not depend on $h, t\in \N$ and $ j \in [n]$. Again it is a consequence of the 1-dependent structure.

We define, for any $i \in  [\ell]$, the sets of random times
\begin{equation}\label{SETVu}
 \mathcal{V}_{i, t} := \{ m <t : m=Z_s \text{ for some } s ,  X_{m} \triangleright R_i \}, \quad
 \mathcal{N}_{i, t} := \{ s<t: X_s \triangleright R_i   \} ,
\end{equation}
where $t \in \N \cup \{+ \infty \}$. The  cardinalities are
\begin{equation}\label{Vu}
 V_{i, t} := |   \mathcal{V}_{i, t}       |= \sum_{s=0}^{\infty} \mathbf{1}_{ \{ Z_s \leq t-1 \} }  \mathbf{1}_{  \{ X_{Z_s     }  \triangleright R_i  \} }, \quad
  N_{i, t} :=    |   \mathcal{N}_{i, t}       |=       \sum_{s=0}^{t-1}  \mathbf{1}_{  \{  X_s \triangleright R_i   \}  }.
\end{equation}
By \eqref{MP} and by the ergodic theorem for renewal process, one has
\begin{equation}\label{fra1bb}
  \lim_{t \to \infty} \frac{V_{i,t}}{\sum_{j \in [\ell ]}   V_{j,t}     } = p_i    (   \mathcal{T}_\ell   )         \,\,\, a.s.  , \quad \lim_{t \to \infty} \frac{N_{i,t}}{t} =\frac{1}{N^{k+1}}  \,\,\, a.s.
\end{equation}
We define the quantities $ (v_i>0 :i \in [\ell] )$ as
\begin{equation} \label{leobb}
v_i := \lim_{t \to \infty}\frac{V_{i,t}   N^{k+1}  }{t}  \,\,\, a.s.,
\end{equation}
 by
 hypothesis $N \geq M $   one has $v_i \leq 1$, for each $i \in [\ell ]$.
The  equalities in \eqref{fra1bb} and the previous definition give \eqref{probfine}.

We notice that
\begin{equation} \label{spiegare}
N_{i,t} = V_{i,t} + \sum_{j \in [\ell]: j\neq  i} \, \sum_{s=0}^{\infty} \mathbf{1}_{ \{ Z_s \leq t -2\} }  \mathbf{1}_{  \{ X_{Z_s     }  \triangleright R_j \}  }
  \mathbf{1}_{  \{ X_{Z_s  +1   }  \triangleright R_i  \}  }    ,
\end{equation}
indeed if $ X_{Z_s  +1}  \triangleright R_i $, for some $s $, the time $(Z_s+1 )$ belongs to  $\mathcal{N}_{i, \infty}$ but it is  not in $\mathcal{V}_{i, \infty} $.
Let us multiply  by $N^{k+1}  /t$ the previous formula and  take the limit for
$t \to \infty $, then, by the ergodic theorem, by  \eqref{fra1bb} and \eqref{leobb}, one obtains
\begin{equation} \label{nuovaF}
1 =  v_i +        \lim_{t \to \infty}
\frac{N^{k+1}}{t}       \sum_{j \in [\ell ]: j\neq  i} \,\sum_{s=0}^{\infty} \mathbf{1}_{ \{ Z_s \leq t -2\} }  \mathbf{1}_{  \{ X_{Z_s     }  \triangleright R_j \}  }
  \mathbf{1}_{  \{ X_{Z_s  +1   }  \triangleright R_i  \}  }        \qquad a.s.    
\end{equation}
By the ergodic theorem and Lemma \ref{richiesto} one has
\begin{equation}\label{ssh}
1= v_i + \sum_{j \in [\ell ]: j\neq  i}     \frac{O_2(R_j , R_i)}{N^k}          v_j .
\end{equation}
Thus,
\begin{equation}\label{ssh2}
1\leq   v_i  +  \sum_{j \in [\ell ]: j\neq i} \,\frac{O_2 (R_j , R_i)}{N^k} =
        v_i  +  \sum_{j \in [\ell ]} \,\frac{O_2 (R_j , R_i)}{N^k} .
\end{equation}
The inequality \eqref{ssh2} corresponds to the first  inequality in \eqref{vistima}. In particular, $ v_i \geq 1 - (\ell-1 ) /N^k$, for any $i \in [\ell ]$. Thus, for any fixed $i \in [\ell ]$
\begin{equation} \label{purequesta}
  \lim_{t \to \infty}
\frac{N^{k+1}}{t}      \sum_{s=0}^{\infty} \mathbf{1}_{ \{ Z_s \leq t -2\} }  \mathbf{1}_{  \{ X_{Z_s     }  \triangleright R_i \}  }
  \geq 1    - (\ell -1 ) /N^k    \qquad a.s.   
\end{equation}
Now, by \eqref{spiegare}-\eqref{purequesta}, Lemma \ref{richiesto} and ergodicity one has
$$
v_i \leq 1 - \frac{1}{N^k}  \left (1- \frac{  \ell  -1 }{N^k}  \right )  \sum_{j \in [\ell]  }    O_2(R_j, R_i ) .
$$
This end the proof.
\end{proof}

We are now ready to present the following result on the construction of any digraph through the ranking graphs of 
1-dependent uniform chains and identically distributed hitting times.
\begin{theorem} \label{thgrfo}
For any digraph $ \bar \Gb=([n],  \vec{E} )$ there exists  
$\mathbf{X}=(X_m: m \in \N_0) \in \mathcal{M}_1$ and a
collection of identically distributed hitting times $\mathcal{T}_n  =\left\{T_1,T_2,\ldots,T_n\right\}$ on $\mathbf{X}$ such that
$\Gb ( \mathcal{T}_n  )    = \bar \Gb$.
\end{theorem}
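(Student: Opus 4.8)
The plan is to take the patterns $R_1,\dots,R_n$ generated by $\bar\Gb$ as the building blocks. First I would fix $k=M=N=n+1$, let $\mathbf X=\mathbf X^{(N,k)}\in\mathcal M_1$ be the $1$-dependent uniform chain of \eqref{duecolonne}, and define the hitting times $T_i:=T_{R_i}$ for $i\in[n]$. That these times are identically distributed is immediate: by Lemma \ref{tt11} every generated pattern satisfies $O(R_i,R_i)=(1,0)$, hence $O_2(R_i,R_i)=0$, so Remark \ref{remare} applies. Since the ranking graph $\Gb(\mathcal T_n)$ is determined solely by the pairwise probabilities $\mP(T_j<T_i)$, it then remains to show that for every ordered pair $i\neq j$ one has $(i,j)\in\vec E \iff \mP(T_j<T_i)>\tfrac12$ (the case $n=1$ being trivial).

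To compute each pairwise probability I would apply Lemma \ref{lemmastima} with $\ell=2$ to the two-element collection $\{R_i,R_j\}$, which is no-tie because $O_1(R_i,R_j)=0$ by Lemma \ref{tt11}. Since $T_{R_i}\neq T_{R_j}$ almost surely, this yields $\mP(T_j<T_i)=p_j(\{T_{R_i},T_{R_j}\})=v_j/(v_i+v_j)$, where, using $O_2(R_i,R_i)=O_2(R_j,R_j)=0$, the bounds \eqref{vistima} reduce to
\[
1-\frac{O_2(R_i,R_j)}{N^k}\le v_j\le 1-\frac{1}{N^k}\Bigl(1-\frac{1}{N^k}\Bigr)O_2(R_i,R_j),
\]
and symmetrically for $v_i$ with $O_2(R_j,R_i)$ in place of $O_2(R_i,R_j)$.

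Next I would insert the overlaps computed in Lemma \ref{tt11}, namely $O_2(R_j,R_i)=\mathbf 1_{\{(i,j)\in\vec E\}}$ and $O_2(R_i,R_j)=\mathbf 1_{\{(j,i)\in\vec E\}}$, and use crucially that $\bar\Gb$ has no $2$-cycles, so that in the unordered pair $\{i,j\}$ at most one of these two indicators equals $1$. If $(i,j)\in\vec E$ then $O_2(R_j,R_i)=1$ and $O_2(R_i,R_j)=0$, forcing $v_j=1$ while $v_i\le 1-\frac{1}{N^k}\bigl(1-\frac{1}{N^k}\bigr)<1$ (the inequality is strict since $N^k\ge2$); hence $\mP(T_j<T_i)=v_j/(v_i+v_j)>\tfrac12$, i.e.\ $(i,j)\in\vec E(\mathcal T_n)$. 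If neither $(i,j)$ nor $(j,i)$ lies in $\vec E$, both overlaps vanish, $v_i=v_j=1$, and $\mP(T_j<T_i)=\tfrac12$, so no arrow appears; the case $(j,i)\in\vec E$ is the mirror image of the first and gives $\mP(T_j<T_i)<\tfrac12$. In every case $(i,j)\in\vec E\iff\mP(T_j<T_i)>\tfrac12$, which is exactly $\Gb(\mathcal T_n)=\bar\Gb$.

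I expect the only genuinely delicate point to be the quantitative separation of $v_i$ from $v_j$: the ranking arrow is decided by a \emph{strict} inequality at the threshold $\tfrac12$, so the bounds of Lemma \ref{lemmastima} must be sharp enough to keep the value $v=1$ (no overlap) strictly above the value $v\le 1-\frac{1}{N^k}\bigl(1-\frac{1}{N^k}\bigr)$ (overlap present). This is precisely where the factor $\bigl(1-\frac{\ell-1}{N^k}\bigr)$ in \eqref{vistima} and the no-$2$-cycle hypothesis, which guarantees at most one nonzero overlap per pair, do the work; everything else reduces to bookkeeping on the pattern construction and on the identical distribution coming from Remark \ref{remare}.
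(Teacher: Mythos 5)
Your proposal is correct and follows essentially the same route as the paper's own proof: the same patterns generated by $\bar \Gb$, the same chain $\mathbf{X}^{(N,n+1)}$, Lemma \ref{tt11} for the overlaps and the no-tie property, Remark \ref{remare} for identical distribution, and the pairwise application ($\ell=2$) of Lemma \ref{lemmastima} to get $v_j=1$ versus $v_i\le 1-\frac{1}{N^k}\bigl(1-\frac{1}{N^k}\bigr)<1$ when $(i,j)\in\vec E$. The only cosmetic differences are that you invoke the bounds \eqref{vistima} as a black box while the paper re-uses internal quantities from the proof of Lemma \ref{lemmastima}, and that your reading of the overlap formula, $O_2(R_j,R_i)=\mathbf{1}_{\{(i,j)\in\vec E\}}$, correctly matches the proof of Lemma \ref{tt11} and its use in the paper's argument (the lemma's displayed statement has the indices transposed).
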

\begin{proof}
Let us consider the 1-dependent uniform  chain $\mathbf{X}^{(N, n+1)}   $ with $N \geq  n+1$.
By Lemma \ref{tt11},
the patterns  $R_1, \ldots , R_n $  generated by $\bar \Gb$ have the no-tie property. Furthermore, by Lemmas \ref{tt11nuovo}-\ref{tt11} the hitting times $\{T_{R_1}, \ldots, T_{R_n}\}$ are identically distrributed. 

For  distinct indices $i,j$, one has 
$$
p_a(\{ T_{R_i}, T_{R_j}\}) = \mP (T_{R_a}    = Z_1   ) ,
$$
where $a \in \{ i,j\}$.

In the case that $ (i,j)$ and $(j,i) $ are not arrows of  the digraph $ \bar{ \mathbb{G }  } $
then, by Lemma \ref{tt11}, $O_2(R_i, R_i) = O_2(R_j, R_j) =O_2 (R_i, R_j) =O_2(R_j, R_i) =0 $. Thus, by \eqref{spiegare} of Lemma \ref{lemmastima} follows that $N_{i,t} = V_{i,t}$ and $ N_{j,t} = V_{j,t}$. Hence by \eqref{fra1bb} and \eqref{leobb} one has
$$
p_i   (   \{ T_{R_i}, T_{R_j}\}   ) = p_j    (   \{ T_{R_i}, T_{R_j}\}   ) = \frac{1}{2}.
$$
Hence,  $ (i,j)  $ and $(j,i)$  does not belong to  $\Gb ( \mathcal{T}_n  ) $.

\medskip

We now consider the case: $(i,j) $ is in  $ \bar{\mathbb{G }}$. By Lemma \ref{tt11},
$O(R_i, R_j) =(0,0) $ and
$  O(R_j, R_i)= (0,1)    $. Thus, by \eqref{spiegare} follows that $N_{j,t} = V_{j,t}$ while
$$
N_{i,t} = V_{i,t} +  \, \sum_{s=0}^{\infty} \mathbf{1}_{ \{ Z_s \leq t -2\} }  \mathbf{1}_{  \{ X_{Z_s     }  \triangleright R_j \}  }     \mathbf{1}_{  \{ X_{Z_s  +1   }  \triangleright R_i  \}  }    .
$$
Now, defining $v_i $ and $v_j$ 
as in the proof of Lemma \ref{lemmastima}, $v_j=1$ while
$$
v_i \leq 1 - \frac{1}{N^k}  \left (1- \frac{  1 }{N^k}  \right )  <1.
$$
Therefore
$$
p_i   (   \{ T_{R_i}, T_{R_j}\}   ) = \frac{v_i}{ v_i +v_j}   <    \frac{v_j}{ v_i +v_j} =       p_j    (   \{ T_{R_i}, T_{R_j}\}   ) .
$$
Hence, $ (i,j)  $ belongs to $\Gb ( \mathcal{T}_n  ) $.
\end{proof}

We end the section with the following example

\begin{example}  \label{escostru2}
 We want to construct  $\mathbf{X}\in \mathcal{M}_1$ and three identically distributed hitting times such that 
\begin{equation}\label{T3T}
\mP(T_1 <T_2 ) > \frac{1}{2}, \qquad
 \mP(T_2 <T_3 ) > \frac{1}{2}, \qquad
\mP(T_3 <T_1 ) > \frac{1}{2}.
 \end{equation}
Thus we consider the digraph $ \bar \Gb $ and the generated patterns $R_1, R_2, R_3$ defined in Example \ref{escostru}. 
We take $\mathbf{X}= \mathbf{X}^{(4,4)}$ and $T_i = T_{R_i}$, for $i =1,2,3$. 
Now, by Theorem \ref{thgrfo}, the inequalities in \eqref{T3T} hold.
\end{example}

\section{A  Penney-type game}\label{comb}

The classical Penney's game concerns
 the occurrence of different strings in a sequence of independent random draws of letters. This kind of problem was studied and solved in \cite{chen1979} and \cite{GO1981} (see also \cite{DSS2} for a version of the  game with many players). In \cite{GO1981}, among other results, the Authors give the construction for the optimal reply or optimal string to every string chosen  by the first player. The game is always unfavorable for the player who chooses first.
The cause  of this behavior lies in the absence of transitivity for the stochastic  precedence order  (see e.g.
\cite{DSS1, Savage, trybula1969}).


To introduce our Penney-type game we need some notation. 
Let   $\mathcal{T}_n=\{ T_1, T_2 , \ldots , T_n \}$ be a collection of no-tie r.v.,
for $A \subset [n ]$ we write
$$T^{(A)}     = \min \{T_i : i \in A\}.$$
By the no-tie property, if the subsets $ A, B \subset [n ]$ are disjoint then $\mP (T^{(A)}      =T^{(B)}     ) =0$.

Let  $r_1, r_2 \in \N$,  $ \mathbf{X}  \in \mathcal{M}_1$ and  let  $   \mathcal{T}_n     $  be   a collection of $n $ identically distributed hitting times
on $\mathbf{X}$ with $n \geq r_1+r_2$. We define the stochastic zero-sum game  G$_{r_1,r_2 }(\mathbf{X} ,\mathcal{T}_n )  $ as follows:
\begin{itemize}
  \item[\emph{Step 1.}] Player \rom{1} chooses  a set  $A \subset [n]$ with $   |A|= r_1 $.
  \item[\emph{Step 2.}] Player \rom{2} chooses a set  $B \subset [n] \setminus A$   with $|   B|= r_2 $.
  \item[\emph{Step 3.}] Player \rom{1} chooses two nonempty sets  $A'  \subset A$  and $B'  \subset B$.
  \item[\emph{Step 4.}] If $T^{(A' )}   <  T^{(B')} $ then Player \rom{2} pays  $|B'| $ dollars  to Player \rom{1}, otherwise  Player \rom{1} pays $|A' | $ dollars       to Player \rom{2}.
\end{itemize}
The idea underlying this payoff is that, in the final  stage, each player pays one dollar for betting on any hitting time and the winner  takes all the stakes.
After the choice  of $A'$ and $B' $, the expected payoff of Player \rom{1} is given by
\begin{equation}\label{payoff}
 |B'| \, \cdot \, \mP ( T^{(A' )}   <  T^{(B')}    ) -
  |A'| \, \cdot \, \mP ( T^{(A' )}   >  T^{(B')}    ).
\end{equation}



Note that for given $ \mathbf{X} \in \mathcal{M}_1$ and for a collection of hitting times $\mathcal{T}_n$
the expected  payoff of the first player is a non-decreasing function
of $r_1$ and $ r_2 $, as long as $r_1+r_2 \leq n$.    Indeed, when $r_1' \geq r_1$ or $r_2'  \geq r_2$,  the
first player can mimic, for  G$_{r'_1, r'_2} (\mathbf{X} , \mathcal{T}_n)$,  the strategies  used in G$_{r_1, r_2} (\mathbf{X} , \mathcal{T}_n)$. Therefore his expected payoff is a monotone
increasing function in $r_1$ and $r_2$ when the two players adopt an optimal strategy.

It is quite easy to construct for given $ r_1$, $r_2$ and $n \geq r_1+r_2$  games of this kind that are fair or favorable to Player \rom{1}. We will come back to this point in this section. However,   we also determine a threshold $\calS (r_1, r_2)$ through a graph
characterization that will be used in the following result (see formula \eqref{def_grafica} below). 
\begin{theorem}\label{qaz} For  any $r_1, r_2 \in \N$,  
  there exist $  \mathbf{X} \in \mathcal{M}_1     $    and a family of identically distributed hitting times $        \mathcal{T}_n $ on
$  \mathbf{X}    $ such that the game G$_{r_1, r_2} (\mathbf{X}, \mathcal{T}_n )$
is favorable to Player \emph{\rom{2}}  if and only if
$n \geq \calS (r_1, r_2)$.
\end{theorem}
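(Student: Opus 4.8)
The plan is to reduce the probabilistic game $\mathrm{G}_{r_1,r_2}(\mathbf{X},\mathcal{T}_n)$ to a purely combinatorial condition on a digraph, for which the threshold $\mathcal{S}(r_1,r_2)$ of \eqref{def_grafica} is by construction the exact cutoff, and then to prove the equivalence between probabilistic favorability and that combinatorial condition. Note first that, once Steps 1--2 have fixed $A$ and $B$, Player \rom{1} moves last, so in Step 3 he chooses $(A',B')$ to maximise \eqref{payoff}; writing $q=\mP(T^{(A')}<T^{(B')})$ this payoff equals $(|A'|+|B'|)q-|A'|$, which is positive exactly when $q>\frac{|A'|}{|A'|+|B'|}$, i.e. exactly when the win probability beats its ``symmetric value''. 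Hence the value of the game is $\max_A\min_B\max_{A',B'}$ of \eqref{payoff}, and the game is favorable to Player \rom{2} precisely when, for every admissible $A$, Player \rom{2} has a response $B$ with $\mP(T^{(A')}<T^{(B')})<\frac{|A'|}{|A'|+|B'|}$ for all nonempty $A'\subseteq A$, $B'\subseteq B$. The whole analysis therefore rests on controlling the quantities $\mP(T^{(A')}<T^{(B')})=\sum_{a\in A'}p_a(\mathcal{T}_{A'\cup B'})$ through Lemma \ref{lemmastima}.

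For the direction $n\ge\mathcal{S}(r_1,r_2)$ I would argue by explicit construction. By the definition \eqref{def_grafica} there is a digraph on $\mathcal{S}(r_1,r_2)$ vertices realising Player \rom{2}'s defensive property; after checking the (routine) monotonicity that such a digraph extends by dummy vertices to any $n\ge\mathcal{S}(r_1,r_2)$ without destroying the property, fix such a $\bar\Gb$ on $[n]$ and realise it via Theorem \ref{thgrfo} with the chain $\mathbf{X}^{(N,n+1)}$ and the patterns $R_1,\dots,R_n$ generated by $\bar\Gb$. The point is then quantitative rather than qualitative. By the two–sided estimate \eqref{vistima}, for $S'=A'\cup B'$ one has $v_i=1-N^{-(n+1)}\sum_{j\in S'}O_2(R_j,R_i)+O(N^{-2(n+1)})$ uniformly, so a first-order expansion of $\mP(T^{(A')}<T^{(B')})=\bigl(\sum_{a\in A'}v_a\bigr)/\bigl(\sum_{j\in S'}v_j\bigr)$ gives $\frac{|A'|}{|A'|+|B'|}$ plus a term of order $N^{-(n+1)}$ whose sign is read off from the overlaps $O_2(R_j,R_i)$ and hence, by Lemma \ref{tt11}, from the arcs of $\bar\Gb$. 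Choosing $N$ large enough, this correction is strictly negative for every one of Player \rom{1}'s options $(A',B')$ against Player \rom{2}'s response $B$ — this being exactly the content of the defensive property of $\bar\Gb$ — so each payoff \eqref{payoff} is strictly negative and the game is favorable to Player \rom{2}.

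For the converse I would prove the contrapositive: if $n<\mathcal{S}(r_1,r_2)$, then no $\mathbf{X}\in\mathcal{M}_1$ and no identically distributed hitting times $\mathcal{T}_n$ make the game favorable to Player \rom{2}. Here one cannot invoke the construction, since the claim must hold for every realisation; instead I would extract from $\mathcal{T}_n$ the combinatorial data that the defensive property constrains. Concretely, favorability to Player \rom{2} forces, against each $A$, a response $B$ that in particular defeats Player \rom{1}'s singleton options $A'=\{a\}$, $B'=\{b\}$ and his ``one-against-many'' options $A'=\{a\}$, $B'=B$ and $A'=A$, $B'=\{b\}$; translating $q<\frac{|A'|}{|A'|+|B'|}$ for these options into arc and degree constraints on the associated ranking digraph of $\mathcal{T}_n$ on the vertex set $[n]$ should reproduce exactly the hypotheses of \eqref{def_grafica}, whence $n\ge\mathcal{S}(r_1,r_2)$, a contradiction. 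The main obstacle is precisely this last step: one must show that the finitely many necessary inequalities coming from these particular subset choices of Player \rom{1} are already strong enough to build the full combinatorial certificate of \eqref{def_grafica} — in other words, that the probabilistic defensive property is \emph{equivalent} to, and not merely implied by, the graph-theoretic one, so that the selected family of Player \rom{1} strategies is genuinely exhaustive for \emph{all} identically distributed hitting times and not only for those arising from our patterns.
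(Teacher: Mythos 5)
Your reduction of favorability to ``for every $A$ there is a response $B$ with $\mP(T^{(A')}<T^{(B')})<|A'|/(|A'|+|B'|)$ for all nonempty $A'\subseteq A$, $B'\subseteq B$'' is correct, and your treatment of the direction $n\geq\calS(r_1,r_2)$ is sound and essentially the paper's: the paper packages the very same first-order control of the $v_i$'s from Lemma \ref{lemmastima} into the notion of a $2$-determined family (Theorem \ref{THs}) and then invokes item ii of Theorem \ref{THgame}, and its estimate works for every $N\geq n+1$ rather than only for $N$ large; your inline expansion is a legitimate substitute. One caution: the ``dummy vertices'' used to pass from $\calS(r_1,r_2)$ vertices to $n$ vertices cannot be isolated — they must be given all outgoing arcs (i.e.\ be beaten by everyone), exactly as in the induction step of Theorem \ref{directional}, since otherwise a set $A$ containing a dummy vertex has no valid response $B$.

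The genuine problem is the converse, which you leave unfinished: the ``main obstacle'' you describe does not exist, and seeing why closes the proof immediately. The condition defining $(r_1,r_2)$-directionality, and hence the threshold \eqref{def_grafica}, is a purely pairwise, arc-level condition: $A\to B$ means precisely that every single arc $(i,j)$ with $i\in A$, $j\in B$ is present. Therefore the singleton options of Player \rom{1} are already exhaustive: if the game is favorable to Player \rom{2}, then for every $A$ of size $r_1$ his winning response $B$ must in particular defeat every choice $A'=\{i\}$, $B'=\{j\}$, and since that payoff equals $\mP(T_i<T_j)-\mP(T_i>T_j)$, negativity together with the no-tie property built into the game forces $\mP(T_j<T_i)>\frac{1}{2}$, i.e.\ $(i,j)\in\vec E(\mathcal{T}_n)$, for all $i\in A$, $j\in B$. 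That is $A\to B$ in the ranking graph, so $\Gb(\mathcal{T}_n)$ is $(r_1,r_2)$-directional; no ``one-against-many'' options and no degree constraints are needed, and there is no risk of the probabilistic property being strictly stronger than the combinatorial one. The only remaining step, which you should not omit, is that $\calS$ in \eqref{def_grafica} is defined through tournaments while $\Gb(\mathcal{T}_n)$ need not be one: orient the missing pairs arbitrarily; adding arcs preserves directionality, so a directional tournament on $n$ vertices exists and $n\geq\calS(r_1,r_2)$ by definition. This is exactly the paper's route — item i of Theorem \ref{THgame}, proved via the singleton strategy, combined with Corollary \ref{coro} — so your plan, once this illusory obstacle is removed, coincides with the paper's proof.
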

This result is also related with  "voting paradoxes", see e.g.
  \cite{AlonPara} and \cite{Dominating2006}.

\subsection{Existence of $(r_1, r_2)$-directional graphs}

We start this subsection with some definitions.
For a digraph $\Gb = ([n], \vec{E})$ and for disjoint  $A , B \subset [n]$ we write $A \to B $ if for any $i \in A $ and $j \in B$
one has that $(i,j) \in \vec{E}$.
For $r_1, r_2 \in \N$, we say   that a
digraph $\Gb=([n], \vec E)$ is $( r_1, r_2 )$-\emph{directional} if for any $A \subset  [n]$ with $|A|=r_1$ there exists $B \subset [n] \setminus A $ with $ |B| = r_2 $ such that $A \rightarrow B$ (see \cite{Erdos1963} and \cite{Dominating2006} for similar definitions). 
For any $r_1, r_2 \in \N $, let us define
\begin{equation}\label{def_grafica}
\mathcal{S} (r_1, r_2) : = \inf \left \{ k \geq r_1 + r_2: \text{there exists a $(r_1, r_2)$-directional tournament }
 ([k], \vec E)     \right \}   .
\end{equation}
In \cite{Erdos1963} Erd\H{o}s   analyses  a problem that correspond to the
existence of $(r_1, 1)$-directional graphs  (see also \cite{AlonSpencer}).    The probabilistic method developed there can be easily adapted in our case.

\begin{theorem}\label{directional}
For any $r_1, r_2 \in \N$ and  any $n \geq \mathcal{S} (r_1, r_2)$  there exists a $(r_1, r_2)$-directional  tournament $\mathbb{T} = ([n], \vec E)$. Moreover
\begin{equation} \label{boundE}
\mathcal{S} (r_1, r_2) \leq \inf \left  \{   n \geq r_1+r_2:  \binom{n}{r_1}  \left  (1- \frac{1}{2^{r_1 r_2}}  \right )^{  \left  \lfloor   \frac{n - r_1}{r_2}  \right   \rfloor         }    <1
    \right    \} < \infty .
\end{equation}
\end{theorem}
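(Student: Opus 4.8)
The plan is to split the statement into two independent parts: first, an upward-closure (monotonicity) property showing that if a $(r_1,r_2)$-directional tournament exists on some vertex set, then one exists on every larger vertex set; second, an Erd\H{o}s-style probabilistic argument producing the explicit upper bound in \eqref{boundE}, which simultaneously guarantees $\mathcal{S}(r_1,r_2)<\infty$ and hence that the infimum in \eqref{def_grafica} is attained.

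For the monotonicity I would argue by induction on $n$, reducing everything to the single step ``existence on $[n]$ implies existence on $[n+1]$''. Given a $(r_1,r_2)$-directional tournament $\mathbb{T}_0=([n],\vec E)$, I extend it to $[n+1]$ by declaring the new vertex $n+1$ to be a global \emph{source}, i.e. putting $(n+1,j)\in\vec E$ for every $j\in[n]$ while keeping all edges of $\mathbb{T}_0$. Given $A\subset[n+1]$ with $|A|=r_1$, there are two cases. If $n+1\notin A$, the dominated set $B\subset[n]\setminus A$ supplied by $\mathbb{T}_0$ still works verbatim. If $n+1\in A$, write $A=A'\cup\{n+1\}$ with $|A'|=r_1-1$; I pad $A'$ to a set $A''\subset[n]$ with $|A''|=r_1$ (possible since $n\geq r_1+r_2>r_1-1$), apply the directional property of $\mathbb{T}_0$ to obtain $B\subset[n]\setminus A''$ with $|B|=r_2$ and $A''\to B$, and then observe that $A'\to B$ holds because $A'\subset A''$, while $n+1\to B$ holds automatically because $n+1$ is a source. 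Since $B\subset[n]$ is disjoint from $A$, this exhibits the required dominated set, so the extended tournament is $(r_1,r_2)$-directional. The base case $n=\mathcal{S}(r_1,r_2)$ holds by definition of the infimum (once finiteness is established).

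For the bound I would invoke the probabilistic method on a uniformly random tournament on $[n]$, orienting each edge independently and with probability $1/2$ in each direction. The tournament fails to be $(r_1,r_2)$-directional precisely when some $A$ of size $r_1$ dominates no $r_2$-subset of $[n]\setminus A$. For a fixed $A$ and a fixed $B$ of size $r_2$ disjoint from $A$, the event $A\to B$ requires all $r_1 r_2$ edges between $A$ and $B$ to point the right way, hence has probability $2^{-r_1 r_2}$. To decouple, I restrict attention to $t:=\lfloor (n-r_1)/r_2\rfloor$ pairwise disjoint candidate blocks $B_1,\dots,B_t\subset[n]\setminus A$ of size $r_2$; the events $\{A\to B_s\}$ are then independent, since they involve disjoint sets of edges, so the probability that $A$ dominates none of them is at most $(1-2^{-r_1 r_2})^{t}$. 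A union bound over the $\binom{n}{r_1}$ choices of $A$ shows that the probability of failure is at most $\binom{n}{r_1}\bigl(1-2^{-r_1 r_2}\bigr)^{\lfloor (n-r_1)/r_2\rfloor}$. Whenever this quantity is strictly below $1$, a $(r_1,r_2)$-directional tournament must exist, yielding $\mathcal{S}(r_1,r_2)\leq n$ and hence \eqref{boundE}; finiteness follows because the geometric factor decays exponentially in $n$ while $\binom{n}{r_1}$ grows only polynomially, so the defining set in \eqref{boundE} is nonempty.

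The genuinely delicate point is the monotonicity step rather than the counting: one must orient the new edges so that enlarging the vertex set cannot destroy the dominating property, and the source construction together with the padding of $A'$ up to size $r_1$ is exactly what makes any set containing the new vertex reduce to a set already handled by the inductive hypothesis. By contrast, the probabilistic estimate is a routine adaptation of the method of Erd\H{o}s in \cite{Erdos1963}, the only mild care being the choice of the disjoint blocks $B_1,\dots,B_t$ so that independence can be legitimately invoked.
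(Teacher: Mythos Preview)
Your proof is correct and follows essentially the same approach as the paper: an inductive source-vertex extension for the monotonicity step, followed by the Erd\H{o}s-style random tournament with disjoint candidate blocks and a union bound for the quantitative estimate. The only difference is cosmetic---you make the padding of $A'$ to a set of size $r_1$ explicit, whereas the paper tacitly uses that any $(r_1-1)$-set in an $(r_1,r_2)$-directional tournament also dominates some $r_2$-set.
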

\begin{proof}
For $r_1, r_2 \in  \N$, we first assume that for a specific  $n_0 \in \N $ there exists
a $(r_1, r_2)$-directional tournament $    \mathbb{T}_{n_0} =([n_0] , \vec E)$. Then, we prove that for any $n > n_0 $
 there exists
a $(r_1, r_2)$-directional tournament $ \mathbb{T}_n =([n], \vec E_n)$. The proof is by induction.

Suppose that for $n-1 \geq n_0 $ there is  a   $(r_1, r_2)$-directional      tournament
$    \mathbb{T}_{n-1} =([n-1], \vec E_{n-1})$, then  we will construct a  tournament $    \mathbb{T}_{n} =([n], \vec E_{n})$ that is
$(r_1, r_2)$-directional.

For any distinct $i,j \in [n-1]$ let $(i,j)$ be in $\vec E_n$ if and only if
$(i,j) \in \vec E_{n-1}$. Moreover, for any  $i \in [n-1]$, we impose that $(n, i) $ belongs to $\vec E_n $.
It is clear that  if $ \mathbb{T}_{n-1}$ is a $(r_1, r_2)$-directional tournament then  also $ \mathbb{T}_n$ is an $(r_1, r_2)$-directional tournament. Indeed, if $A \subset [n-1]$ with $|A| =r_1$ then one can select  $B  \subset [n-1] $ with $|B| =r_2$ and $A \rightarrow B$,  as in  $ \mathbb{T}_{n-1}$. On the other hand if we consider an $A \subset [n]$ such that
$n \in A$ and $|A| = r_1$ then one can take  $B \subset [n-1]$ such that
$(A\setminus \{n\} )\rightarrow B $  and $|B| =r_2$. In any case the relation $(A\setminus \{n\} )\rightarrow B $ implies $A \rightarrow B $ because $(n, i) \in \vec E_n$ for any  $i \in [n-1]$.

Now, we  prove formula \eqref{boundE}, by the probabilistic method (see e.g.  \cite{AlonSpencer}).
 For this purpose, we will construct  a random tournament, denoted by $ \mathbb{T} (n )= ([n] , \vec{E} (n ))$, and  we will show that it is $(r_1, r_2)$-directional  with positive probability. 

For two distinct vertices $u,v$, either  $(u,v) \in  \vec{E} (n)$
or $ (v, u) \in \vec{E} (n) $;  both these events occur with probability $1/2$. Moreover all the  events involving   distinct edges  are assumed independent.

 For  given $r_1, r_2 \in \N$ let  $\widetilde V \subset [ n ]$ with $|  \widetilde        V|=r_1 $,
we define the event
$$
A_{\widetilde V} := \{ \exists V' \subset  [n ] \setminus \widetilde V :  \widetilde  V \to V', \text{ with } |V'| = r_2 \}.
$$
Now, for a given $\widetilde V $ having cardinality $ r_1$,  let us choose  a family of sets of vertices
$$
\left (V_i : |V_i|= r_2, V_i \subset [n]\setminus \widetilde V, \,\,\, i = 1, \ldots, \left  \lfloor   \frac{n - r_1}{r_2}  \right   \rfloor    \right ) ,
$$
with  $V_i \cap V_j = \emptyset $, for $i \neq j$.

By independence of the random directions involving different  edges one has
$$
\mP(A_{\widetilde V}^c) \leq  \mP \left ( \bigcap_{i=1}^{ \left  \lfloor   \frac{n - r_1}{r_2}  \right   \rfloor }  \{ \widetilde V \to V_i \}^c  \right  ) \leq   \left  (1- \frac{1}{2^{r_1 r_2}}  \right )^{  \left  \lfloor   \frac{n - r_1}{r_2}  \right   \rfloor         }          ,
$$
for any  $\widetilde V \subset [n] $ with $ | \widetilde   V|=r_1$.

\noindent
 By subadditivity of the probability measure one has
 \begin{equation}\label{cuscus}
 \mP \left ( \bigcap_{  \widetilde       V \subset [n ] : |    \widetilde      V|=r_1} A_{  \widetilde   V}  \right ) = 1- \mP \left ( \bigcup_{  \widetilde   V \subset [n ] : |  \widetilde   V|=r_1} A^c_{  \widetilde  V} \right ) \geq 1 -   \binom{n }{r_1}    \left  (1- \frac{1}{2^{r_1 r_2}}  \right )^{  \left  \lfloor   \frac{n - r_1}{r_2}  \right   \rfloor         }      .
 \end{equation}
 For any $r_1, r_2\in \N$,
\begin{equation}\label{limite0}
\lim_{n \to \infty}  \binom{n}{r_1}   \left  (1- \frac{1}{2^{r_1 r_2}}  \right )^{  \left  \lfloor   \frac{n - r_1}{r_2}  \right   \rfloor         }  =0.
\end{equation}
Formulas  \eqref{cuscus} and \eqref{limite0}
imply   \eqref{boundE}.
\end{proof}

Observe that, for a given $n \in \N$, if $\Gb= ([n], \vec E)$ is a $ (r_1,r_2)$-directional digraph and $ \Gb'= ([n], \vec E')$ with $\vec E \subset \vec E'$ then also $\Gb'$ is a $ (r_1,r_2)$-directional digraph.
This easy observation and Theorem \ref{directional} imply the following result.
\begin{corollary} \label{coro}
For $ r_1, r_2 \in \N$,  all the $(r_1, r_2)$-directional digraphs have a number of vertices larger than or equal to     $  \calS  (r_1, r_2) $.
\end{corollary}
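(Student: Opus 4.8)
The plan is to reduce the digraph case to the tournament case via the monotonicity observation just recorded, exploiting the fact that $\calS(r_1,r_2)$ is defined in \eqref{def_grafica} as an infimum over $(r_1,r_2)$-directional \emph{tournaments} and not over arbitrary digraphs. The entire content of the corollary lies in the remark that any $(r_1,r_2)$-directional digraph can be completed, on the same vertex set, to a $(r_1,r_2)$-directional tournament.

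First I would fix a $(r_1,r_2)$-directional digraph $\Gb = ([n], \vec E)$ and record that necessarily $n \geq r_1 + r_2$: by definition there must exist, for some $A$ with $|A| = r_1$, a set $B \subset [n]\setminus A$ with $|B| = r_2$, and disjointness of $A$ and $B$ forces $n \geq r_1 + r_2$. This is what makes $n$ eligible to appear in the index set of \eqref{def_grafica}. Next I would complete $\Gb$ to a tournament on $[n]$. Since a digraph in our sense is an oriented graph without loops or $2$-cycles, for each unordered pair $\{u,v\}\subset[n]$ at most one of the arcs $(u,v),(v,u)$ lies in $\vec E$; for every pair carrying no arc I insert one of the two orientations arbitrarily, producing a tournament $\mathbb{T} = ([n], \vec E')$ with $\vec E \subseteq \vec E'$. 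By the observation preceding the statement, enlarging the arc set preserves $(r_1,r_2)$-directionality, so $\mathbb{T}$ is again $(r_1,r_2)$-directional.

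Consequently $n$ belongs to $\{ k \geq r_1 + r_2 : \text{there exists a } (r_1,r_2)\text{-directional tournament } ([k], \vec E)\}$, and passing to the infimum yields $n \geq \calS(r_1,r_2)$, which is the assertion. The role of Theorem \ref{directional} is to make this conclusion non-vacuous: it guarantees $\calS(r_1,r_2) < \infty$ and, combined with the present corollary, identifies $\calS(r_1,r_2)$ as the exact threshold for the existence of $(r_1,r_2)$-directional digraphs, not merely tournaments. I do not expect any genuine obstacle here; the only point deserving care is that the no-$2$-cycle hypothesis on digraphs is precisely what permits a legitimate tournament completion satisfying $\vec E \subseteq \vec E'$, after which the monotonicity observation closes the argument.
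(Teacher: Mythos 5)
Your proposal is correct and is precisely the argument the paper intends: the paper proves the corollary by combining the monotonicity observation (adding arcs preserves $(r_1,r_2)$-directionality) with the definition of $\calS(r_1,r_2)$ as an infimum over tournaments, which is exactly your completion-to-a-tournament argument, with the no-$2$-cycle property justifying the completion. Your additional remarks — that $n \geq r_1+r_2$ must be checked so that $n$ lies in the index set of \eqref{def_grafica}, and that Theorem \ref{directional} serves only to make the threshold finite rather than to prove the inequality — are accurate refinements of the same proof, not a different route.
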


\subsection{Favourable, fair and unfavorable games} \label{finale}
In the following, we take the point of view of the second player so we declare  \emph{favorable} (resp. \emph{fair}
and \emph{unfavorable}) the game if the expected value of the payoff of Player \rom{2}
is positive (resp. null and negative), when both players adopt optimal strategies.

We first construct, for any $n \geq r_1+r_2 $ a fair game G$_{r_1, r_2}( \mathbf{X} , \mathcal{T}_n)$.
 Let $\mathbf{X} =(X_m : m \in \N_0)$ be a sequence of i.i.d. random variables taking value on  $[n]$, with $X_0 $ uniformly  distributed  on $[n]$,
hence $\mathbf{X}\in \mathcal{M}_1$.  Let
$\mathcal{T}_n =\{ T_1, \ldots , T_n \}$ be the collection of identically distribute hitting times, where
$T_i= \inf \{ m \in \N_0 :X_m = i \}, \text{ for } i \in [n] .$
For any strategy of the players the game has a null expected payoff, therefore the game is trivially fair.

Now we construct an unfavorable game for any $n \geq r_1+r_2$.   Let us consider the Markov chain $\mathbf{X}^{(n+1, n+1)}$, a tournament
$\mathbb{T} = ([n], \vec E)$ with the property that $(i, n) \in \vec E$ for any $i \in [n-1]$, the patterns $R_1, \ldots , R_n $ generated by
$\mathbb{T}$ and the hitting times $T_{R_1}, \ldots , T_{R_n}$.  Player \rom{1} takes
$A$ with $n \in A  $ then, for any chosen set $B$ by Player \rom{2},  Player \rom{1} selects $A' = \{n \}$ and $B' \subset B $ with $|B'| =1$. By construction $B \to \{n \}$,  therefore Player \rom{1} has guaranteed  a positive expected payoff despite of the fact that this strategy could be suboptimal.

In order to find for which $n $ there exist  favorable games we need some more discussions and definitions. The following Definition \ref{small} and \ref{2-det} are similar
to others given in \cite{DMS19} but they are used there for different applications and purposes.

\begin{definition} \label{small}
Let us consider two finite sets of random variables $\mathcal{S}_A =\{ S_i : i \in  A \} $ and $\mathcal{S}_B =\{ S_i : i \in B \}$, such that $\mathcal{S}_A \cup \mathcal{S}_B$ has the no-tie property. We say that
$\mathcal{S}_A$ is \emph{small} with respect to $\mathcal{S}_B$ iff
\begin{equation}\label{ssmm}
\frac{1}{|A|}\sum_{i \in A} p_i (   \mathcal{S}_A \cup \mathcal{S}_B  ) >\frac{1}{|B|}\sum_{i \in B} p_i (   \mathcal{S}_A \cup \mathcal{S}_B  ).
\end{equation}
\end{definition}

\medskip

First we present an example showing that the collective behaviour cannot be deduced by pair relations.

\begin{example}
Let $S_1 = \frac{49}{100}$ and let $S_2, S_3 $ be independent r.v. with uniform law on $[0,1]$. The collection $\{S_1, S_2,S_3 \}$ has the no-tie property.
The r.v.
 $S_1$ is small with respect to $S_i $, for $i=2,3$, because
 $\mP (S_1< S_2) =   \mP (S_1< S_3) =     \frac{51}{100}$. But
 $$
 \frac{1}{2} \sum_{i=2}^3  p_i     (\{ S_1, S_2, S_3 \})  =  \frac{1}{2}  -  \frac{1}{2}  \left (\frac{51}{100}  \right )^2  > \left (\frac{51}{100}  \right )^2 =
 p_1    (\{ S_1, S_2, S_3 \})   .
 $$
Therefore $\{ S_2, S_3\}$ is small with respect
to $S_1$. This example shows that the analysis of the
smallness property cannot be reduced to the study of
pair relations. In fact, these  can be completely reversed
when we move on to consider collections of random variables. A complete theory that studies all the possible ranking in a set of  random variables is in
\cite{DS20} and \cite{Saari}.
\end{example}

In order to avoid some difficulties in the construction of favorable games we define some special systems of random variables.

  \begin{definition} \label{2-det}
 Let $\mathcal{S}_n = \{S_1, \ldots , S_n \}  $ be a collection of random variables  and let $\Gb( \mathcal{S}_n ) =  ([n], \vec{E})$ be the associated ranking graph. We say that
 $\mathcal{S}_n$ is  $2$-\emph{determined} if  for any two disjoint $A, B \subset [n ]$ such that $A \to B $ then
 $\calS_B =\{ S_i : i \in B \} $ is small with respect to $\calS_A =\{ S_i : i \in A\} $.
  \end{definition}

\begin{theorem}\label{THs}
Let $n \geq 2$ and $ \bar \Gb =([n], \vec E)$ be given. Let  $R_1, \ldots , R_n $ be patterns
in $\mathcal{P}_{n+1,n+1}$ generated by
$\bar \Gb =([n], \vec E)$.
For $N \geq n+1$, let us consider the 1-dependent uniform
chain  $\mathbf{X}^{(N,n+1)}$ and the identically  distributed  hitting times
$\mathcal{T}_n = \{T_{R_1} , \ldots ,  T_{R_n}\} $.
 Then 
 $\mathcal{T}_n$ is $2$-determined. 
\end{theorem}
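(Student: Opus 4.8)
The plan is to reduce the statement to a comparison of the quantities $v_i$ supplied by Lemma \ref{lemmastima}, and then to read off the required strict inequality directly from the directional hypothesis $A\to B$. Fix disjoint nonempty $A,B\subset[n]$ with $A\to B$, set $\ell=|A|+|B|$, and recall that here $k=M=n+1$. The sub-collection $\{R_m:m\in A\cup B\}$ is no-tie and satisfies $O_2(R_m,R_m)=0$ by Lemma \ref{tt11}, so Lemma \ref{lemmastima} applies to the hitting times $\mathcal{T}_{A\cup B}=\{T_{R_m}:m\in A\cup B\}$ and yields $p_i(\mathcal{T}_{A\cup B})=v_i/\sum_{m\in A\cup B}v_m$ together with the two-sided bound \eqref{vistima}. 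Since all the $p_i$ share the same denominator, the defining inequality \eqref{ssmm} for ``$\calS_B$ is small with respect to $\calS_A$'' is equivalent to $\frac{1}{|B|}\sum_{j\in B}v_j>\frac{1}{|A|}\sum_{i\in A}v_i$; thus it suffices to compare the averages of the $v$'s over $B$ and over $A$.

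Next I would translate the graph hypothesis into a statement about the sums appearing in \eqref{vistima}. By Lemma \ref{tt11}, for $m\neq i$ one has $O_2(R_m,R_i)=1$ exactly when $(i,m)\in\vec E$, so $\sum_{m\in A\cup B}O_2(R_m,R_i)$ equals $d(i)$, the out-degree of $i$ inside the subtournament induced on $A\cup B$. The hypothesis $A\to B$ forces $(i,m)\in\vec E$ for every $i\in A$ and $m\in B$, whence $d(i)\geq|B|$ for each $i\in A$. Conversely, because $\bar\Gb$ has no $2$-cycles, $(i,m)\in\vec E$ precludes $(m,i)\in\vec E$; hence no vertex of $B$ has an out-arrow into $A$, the out-neighbours of any $j\in B$ all lie in $B\setminus\{j\}$, and $d(j)\leq|B|-1$.

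Then I would insert these degree bounds into \eqref{vistima}, using the upper estimate for the vertices of $A$ and the lower estimate for those of $B$. This gives $\frac{1}{|A|}\sum_{i\in A}v_i\leq 1-\frac{|B|}{N^k}\bigl(1-\frac{\ell-1}{N^k}\bigr)$ and $\frac{1}{|B|}\sum_{j\in B}v_j\geq 1-\frac{|B|-1}{N^k}$. Subtracting, the difference of the two averages is at least $\frac{1}{N^k}\bigl(1-\frac{|B|(\ell-1)}{N^k}\bigr)$, which is strictly positive precisely when $|B|(\ell-1)<N^k$. This last inequality is immediate from $N\geq n+1$ and $k=n+1$, since $|B|(\ell-1)\leq n(n-1)<(n+1)^{n+1}\leq N^k$. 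Therefore $\frac{1}{|B|}\sum_{j\in B}v_j>\frac{1}{|A|}\sum_{i\in A}v_i$, i.e.\ $\calS_B$ is small with respect to $\calS_A$; as $A,B$ was an arbitrary directional pair, $\mathcal{T}_n$ is $2$-determined.

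I expect the only genuinely delicate point to be the quantitative matching in the last step: one must deploy the sharper (upper) estimate of \eqref{vistima} on $A$ and the cruder (lower) estimate on $B$, and the argument hinges on the out-degree gap between $A$-vertices and $B$-vertices being at least one, which is just enough to beat the correction factor $1-\frac{\ell-1}{N^k}$ once $N^k$ is as large as $(n+1)^{n+1}$. Everything else is bookkeeping with Lemmas \ref{tt11} and \ref{lemmastima}.
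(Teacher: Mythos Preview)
Your proof is correct and follows essentially the same route as the paper: reduce \eqref{ssmm} to a comparison of the $v_i$'s via Lemma~\ref{lemmastima}, translate $A\to B$ into bounds on $\sum_m O_2(R_m,R_i)$, and then subtract. The only quantitative difference is on the $B$-side: you bound each individual $d(j)\le |B|-1$, whereas the paper averages over $B$ and uses that the total number of arrows inside $B$ is at most $\binom{|B|}{2}$ to get the sharper $\frac{1}{|B|}\sum_{j\in B}v_j\ge 1-\frac{|B|-1}{2N^{n+1}}$; either bound is more than enough once $N^{n+1}\ge (n+1)^{n+1}$.

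One remark worth making explicit: the displayed formula in the statement of Lemma~\ref{tt11} has its indices swapped (it would give $O_2(R_m,R_i)=\mathbf 1_{(m,i)\in\vec E}$), but the proof of that lemma, the construction in \eqref{over}, and the paragraph preceding Example~\ref{escostru} all show that the correct relation is the one you use, namely $O_2(R_m,R_i)=\mathbf 1_{(i,m)\in\vec E}$. Your identification of $\sum_m O_2(R_m,R_i)$ with the out-degree of $i$ in the induced subgraph on $A\cup B$ is therefore right, and the rest of the argument goes through as written.
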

\begin{proof} 
For any $C \subset [n]$ let $\mathcal{T}_{C } :=\{   T_i: i \in C \}$. For any disjoint $A, B \subset [n]$ with $A \to B $,
we need to show
\begin{equation}\label{ssmm2}
\frac{1}{|A|}\sum_{i \in A} p_i (   \mathcal{T}_A \cup \mathcal{T}_B  ) <\frac{1}{|B|}\sum_{i \in B} p_i (   \mathcal{T}_A \cup \mathcal{T}_B  ) .
\end{equation}
Inequality \eqref{ssmm2} holds true if and only if
$ \frac{1}{|B|} \sum_{i \in B} v_i-     \frac{1}{|A|} \sum_{i \in A} v_i >0 $, , for disjoint sets $A, B $. 

\noindent
For any $i \in A$, by  Lemma \ref{lemmastima}, one has
$$
v_i \leq 1-\frac{1}{N^{n+1}} \left (  1- \frac{|A|+|B| -1}{N^{n+1}} \right )   \frac{|B|}{N^{n+1}}  .
$$
 Hence,
\begin{equation}\label{Ape}
\frac{1}{|A|} \sum_{i \in A} v_i \leq 1-\frac{|B|}{N^{n+1}} + \frac{   (   |A| + |B| -1 )|B|  }{N^{2n+2}} .
\end{equation}
Analogously, by the first inequality in \eqref{vistima} and the hypothesis that $A \to B$, one has
\begin{equation}\label{Bpe}
\frac{1}{|B|}  \sum_{i \in B } v_i \geq 1 - \frac{1}{N^{n+1}} \sum_{i \in B}\,\, \sum_{j \in B \setminus \{ i\}  : (i,j) \in \vec E   } 1   .
\end{equation}
Hence,
\begin{equation}\label{Bpe2}
\frac{1}{|B|}  \sum_{i \in B } v_i \geq 1 - \frac{|B|  -1 }{2N^{n+1}}
\end{equation}
Therefore  one  obtains 
\begin{equation}\label{NuovA}
\frac{1}{|B|} \sum_{i \in B} v_i-     \frac{1}{|A|} \sum_{i \in A} v_i >
\frac{|B|}{2N^{n+1}}  - \frac{   (   |A| + |B| -1 )|B| }{N^{2n+2}}     > |B| \cdot \left [ \frac{1}{2N^{n+1}}    -
 \frac{1}{N^{2n+1}} \right ]  ,
\end{equation}
where the last inequality is a consequence of  $N > |A| + |B|  -1$.
Thus, for any $N \geq  n+1 \geq 3$ and for any choice of non-empty disjoint sets $A, B \subset  [n]$ the l.h.s. in \eqref{NuovA} results larger than zero.
\end{proof}

We give the key result that will allow the construction of  favorable games.
\begin{theorem}\label{THgame}
Let   $r_1, r_2 \in \N $, $n \geq r_1+r_2$,  let $\mathbf X \in \mathcal{M}_1$  and   $\mathcal{T}_n = \{   T_1, \ldots , T_n \}$
be a collection of identically distributed hitting times.
\begin{itemize}
\item[i.] The game G$_{r_1, r_2} (\mathbf{X}  , \mathcal{T}_n )$ is favorable
$\Rightarrow $   $ \Gb (\mathcal{T}_n) $ is  $(r_1, r_2)$-directional.
\end{itemize}
 Moreover,  let us suppose that $ \mathcal{T}_n $ is $2$-determined then
 \begin{itemize}
 \item[ii.] The game G$_{r_1, r_2} (\mathbf{X}  , \mathcal{T}_n )$ is favorable
 $\iff $   $ \Gb (\mathcal{T}_n) $ is  $(r_1, r_2)$-directional.
 \end{itemize}
\end{theorem}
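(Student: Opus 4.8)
The plan is to treat G$_{r_1,r_2}(\mathbf X,\mathcal T_n)$ as what it is: a finite game of perfect information played sequentially (Player \rom{1} picks $A$, then Player \rom{2} picks $B$, then Player \rom{1} picks $A',B'$). Since every move is observed, backward induction applies and no randomization is needed, so the value to Player \rom{1} equals $\max_{A}\min_{B}\max_{A',B'}$ of the expected payoff \eqref{payoff}. The game is favorable to Player \rom{2} exactly when this value is negative, i.e. when for every admissible $A$ there is an admissible $B\subset[n]\setminus A$ such that \emph{every} choice of nonempty $A'\subset A$, $B'\subset B$ gives a negative payoff to Player \rom{1}. I would make this quantifier structure explicit at the outset, since the rest of the argument is a translation between it and the graph-theoretic and smallness notions.

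For part (i), I would fix an arbitrary $A$ with $|A|=r_1$ and let $B$ be the set furnished by favorability. Specializing to singletons $A'=\{i\}$ and $B'=\{j\}$ with $i\in A$, $j\in B$ reduces \eqref{payoff} to $\mP(T_i<T_j)-\mP(T_i>T_j)$; its negativity together with the no-tie property forces $\mP(T_j<T_i)>\frac12$, that is $(i,j)\in\vec E$ by \eqref{realizza}. As $i\in A$ and $j\in B$ are arbitrary this yields $A\to B$, and since $A$ was arbitrary the ranking graph $\Gb(\mathcal T_n)$ is $(r_1,r_2)$-directional.

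For part (ii), the forward implication is precisely (i) and needs no extra hypothesis, so only the converse uses $2$-determinedness. Assuming $\Gb(\mathcal T_n)$ is $(r_1,r_2)$-directional, for each $A$ Player \rom{2} selects a disjoint $B$ with $A\to B$. The key observation is that $A\to B$ forces $A'\to B'$ for every nonempty $A'\subset A$, $B'\subset B$, so Definition \ref{2-det} applies to the pair $(A',B')$ and says $\mathcal T_{B'}$ is small with respect to $\mathcal T_{A'}$. I would then convert this through the identities $\sum_{i\in A'}p_i(\mathcal T_{A'}\cup\mathcal T_{B'})=\mP(T^{(A')}<T^{(B')})$ and $\sum_{i\in B'}p_i(\mathcal T_{A'}\cup\mathcal T_{B'})=\mP(T^{(A')}>T^{(B')})$, valid by the no-tie property because exactly one index attains the minimum over $A'\cup B'$. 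Writing $\alpha=\mP(T^{(A')}<T^{(B')})$ and $\beta=1-\alpha$, the smallness inequality \eqref{ssmm} reads $\beta/|B'|>\alpha/|A'|$, equivalently $|B'|\alpha-|A'|\beta<0$, which is exactly the negativity of \eqref{payoff}. As this holds for every admissible $A',B'$, the chosen $B$ defeats every $A$ and the game is favorable to Player \rom{2}.

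The main obstacle is not any single calculation but keeping the logical skeleton of ``favorable under optimal play'' aligned with the direction of the smallness inequality and the sign of the payoff; the decisive bridge is the identity rewriting $\mP(T^{(A')}<T^{(B')})$ as the sum of the minimum-probabilities $p_i$ over $A'$, which is what lets the \emph{collective} notion of smallness govern the \emph{collective} game outcome rather than merely the pairwise comparisons detected in part (i).
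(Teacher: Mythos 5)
Your proposal is correct and follows essentially the same route as the paper's own proof: part (i) by specializing to singleton bets $A'=\{i\}$, $B'=\{j\}$ (the paper phrases this as a contradiction, you as the direct contrapositive), and part (ii) by having Player \rom{2} pick $B$ with $A\to B$, noting that $A\to B$ forces $A'\to B'$, and applying $2$-determinedness to the pair $(A',B')$ to get the sign of the payoff. Your write-up is in fact slightly more explicit than the paper's, which leaves implicit both the quantifier structure of favorability under optimal play and the bridging identity $\mP(T^{(A')}<T^{(B')})=\sum_{i\in A'}p_i(\mathcal{T}_{A'\cup B'})$.
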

\begin{proof}
Item i. The proof is by contradiction.  Suppose that $\Gb (\mathcal{T}_n) = ( [n] , \vec E ( \mathcal{T}_n) )$ is not $(r_1, r_2)$-directional.
By hypothesis,
  Player \rom{1} can select $A$, with cardinality $r_1$, such that,  for any
 $B$ with cardinality $r_2$,    $A \not \to B$ in $ \Gb (\mathcal{T}_n) $. Let us consider such  $A$ and $B$. Now, Player \rom{1}  selects  $i \in A$ and $j \in B $ such that  $(i,j) \notin \vec E ( \mathcal{T}_n)$. Hence, $\mathbb{P} (T_j < T_i) \leq  \frac{1}{2}$. Then, by choosing $A' = \{  i  \} $ and
$B' = \{ j\} $ Player \rom{1}  has guaranteed that the game is either fair or in his favour. Obviously,   this strategy could be suboptimal for Player \rom{1}.  
In any case, the game G$_{r_1, r_2} (\mathbf{X}  , \mathcal{T}_n )$ is not favorable for Player \rom{2}.

Item ii. 
For any  set $A$ having cardinality $r_1$, Player \rom{2}  selects  $B$, with cardinality $r_2$,  such that $A \to B$. Let us consider such a $B$. From the fact that the system is $2$-determined for any choice of no empty $A' \subset A $ and  $B' \subset B $
one obtains
$$
\frac{1}{|A'|} \sum_{i \in A'} p_i (\mathcal{T}_{A' \cup B'}) <
\frac{1}{|B'|} \sum_{i \in B'} p_i (\mathcal{T}_{A' \cup B'}) .
$$
By \eqref{payoff},  the expected payoff of Player \rom{2} is positive,
for all  $A' \subset A $, $B' \subset B$.
\end{proof}

\medskip

Now we prove Theorem \ref{qaz}  presented in the beginning of this section.
For given $r_1, r_2$ if $n < \calS (r_1, r_2)$, by  Corollary \ref{coro} any graph is  not $(r_1, r_2)$-directional;
then by Theorem \ref{THgame},   the considered  game is fair or  unfavorable.

If $n \geq  \calS (r_1, r_2) $, by Theorem \ref{directional}, one take 
a digraph $\bar \Gb  =([n], \vec E )$  that is $(r_1, r_2)$-directional.  Let us take 
a chain $\mathbf{X}^{(N, n+1)} \in \mathcal{M}_1$ with $N \geq n+1$.  Then  the  patterns $R_1, \ldots , R_n \in \mathcal{P}_{n+1, n+1} $ generated by  $\bar \Gb$ are considered.
By  Theorem \ref{thgrfo},  we know that $ \Gb  (\{ T_{R_1}, \ldots  ,  T_{R_n}  \}) = \bar \Gb $, moreover  $T_{R_1}, \ldots , T_{R_n}$ are identically distributed 
hitting times and,  by Theorem \ref{THs}, these hitting times  are 
 $2$-determined. 
Finally, by the item ii of Theorem \ref{THgame}, the game G$_{r_1, r_2} (\mathbf{X}^{(N, n+1)}, \{  T_{R_1}, \ldots , T_{R_n}  \}  )$ is favorable. This proves Theorem \ref{qaz}.

\bigskip

\noindent
{\bf Acknowledgements.}   
I would like to thank the anonymous Reviewers for several helpful comments and suggestions that have improved the text.

\bibliographystyle{abbrv}



\end{document}